\shorttitle{Perfect Simulation of $M/G/c$ Queues} 
 \newtheorem{algorithm}{Algorithm}
 \newcommand{\Expect}[1]{\operatorname{\mathbb{E}}\left[#1\right]}
 \newcommand{\Indicator}[1]{\operatorname{\mathbb{I}}\left[#1\right]}
 \newcommand{\Prob}[1]{\operatorname{\mathbb{P}}\left[#1\right]}
 \newcommand{\wt}{\widetilde{t}}
 \newcommand{\tleq}{\trianglelefteq}
 \newcommand{\wD}{\widetilde{D}}
 \newcommand{\wJ}{\widetilde{J}}
 \newcommand{\wS}{\widetilde{S}}
 \newcommand{\wX}{\widetilde{X}}
\begin{document}

\title{Perfect Simulation of $M/G/\lowercase{c}$ Queues} 

\authorone[University of York]{Stephen B. Connor} 
\authortwo[University of Warwick]{Wilfrid S. Kendall} 

\addressone{Department of Mathematics, University of York, York, YO10 5DD, UK.} 
\addresstwo{Department of Statistics, University of Warwick, Coventry, CV5 6FQ, UK.} 

\begin{abstract}
In this paper we describe a perfect simulation algorithm for the stable $M/G/c$ queue.
 Sigman (2011: Exact Simulation of the Stationary Distribution of the FIFO M/G/c Queue. Journal of Applied Probability, 48A, 209--213)
 showed how to build a dominated CFTP algorithm for perfect simulation of the super-stable $M/G/c$ queue operating under First Come First Served discipline.
 Sigman's method used a
 dominating process provided by the corresponding $M/G/1$ queue (using Wolff's sample path monotonicity, which applies when service durations are coupled in order of initiation of service).
 The method
 exploited
 the fact that the workload process for the $M/G/1$ queue remains the same under different queueing disciplines,
 in particular under the Processor Sharing discipline, for which a dynamic reversibility property holds.
 We generalize Sigman's construction to the stable case by comparing the $M/G/c$ queue to a copy run under Random Assignment. 
 This allows us to produce a na{\"\i}ve perfect simulation algorithm based on running the dominating process back to the time it first empties.
 We also construct a more efficient algorithm that uses sandwiching by lower and upper processes constructed as 
 coupled 
 $M/G/c$ queues 
 started respectively from the empty state and the state of the $M/G/c$ queue under Random Assignment.
 A careful analysis shows that appropriate ordering relationships can still be maintained, so long as service durations 
 continue to be coupled in order of initiation of service.
 We summarize statistical checks of simulation output, 
 and demonstrate that the mean run-time is finite so long as the second moment of the service duration distribution is finite.
\end{abstract}

\keywords{ {coalescence};
   {dominated coupling from the past};
   {dynamic reversibility};
   {First Come First Served discipline};
   {First In First Out discipline};
   {Kiefer-Wolfowitz workload vector};
   {path-wise domination};
   {perfect simulation};
   {Processor Sharing discipline};
   {M/G/c queue};
   {Random Assignment discipline};
   {sandwiching};
   {stable queue};
   {stochastic ordering};
   {super-stable queue}} 

\ams{65C05}{60K25; 60J05; 68U20} 



\section{Introduction}\label{sec:introduction}

Coupling methods for queues have a celebrated history, stretching back to the seminal work of Loynes \cite{Loynes-1962}, 
who discussed stability results for very general queues using what would today be described as coupling comparisons, 
together with recursive formulations of queueing dynamics using queues whose commencements originate further and further back in the past. 
More recently Wolff \cite{Wolff-1987} (correcting  \cite{Wolff-1977}) showed how to establish comparisons between different queueing service disciplines for a single queue with multiple servers. 
Here the coupling argument involves assigning the \(n^\text{th}\) service duration to the \(n^\text{th}\) initiation of customer service (as opposed to the \(n^\text{th}\) customer arrival time).

The work of  \cite{Loynes-1962}, together with subsequent work on stochastically recursive systems and renovating events (for example  \cite{BorovkovFoss-1992}) is now viewed as one of the precursors of the celebrated Coupling from the Past (CFTP) algorithm of Propp and Wilson \cite{ProppWilson-1996a}, the first of a series of practical algorithms for performing exact or perfect simulation. 
It was thus a natural step from the first CFTP algorithms to consider how to apply CFTP to the problem of simulating from the equilibria of queueing systems: a variety of CFTP techniques applied to various queues of finite capacity are discussed in  \cite{MurdochTakahara-2006}.
The finite capacity requirement appeared to be an inevitable constraint, imposed by the nature of the original algorithm \cite{ProppWilson-1996a}, which uses maximal and minimal elements of the state-space to generate upper and lower bounding processes.
Clearly a queue without finite capacity will not in general possess a state-space with maximal element.
However the CFTP idea need not be limited to bounded processes. 
Working in the context of stochastic geometry, it has been shown how to replace the maximal element by a suitable dominating random process, so as to produce a \emph{dominated CFTP} algorithm \cite{Kendall-1998d,KendallMoeller-2000}.
Indeed, Kendall \cite{Kendall-2004c} has shown how in principle dominated CFTP can be applied to \emph{any} regular geometrically ergodic Markov chain, using Foster-Lyapunov criteria, small set regeneration, and a dominating process which is in fact a \(D/M/1\) queue.
(Extensions to a class of non-geometrically ergodic chains are given in  \cite{ConnorKendall-2007a}.) 
The work in \cite{Kendall-2004c} strongly indicates that one should search for \emph{practical} dominated CFTP algorithms which solve the problem of simulating from the equilibria of geometrically ergodic queues. 
Very recently Sigman \cite{Sigman-2011} has shown how to use dominated CFTP in the so-called ``super-stable'' case of the $M/G/c$ queue with First Come First Served (\(FCFS\)) discipline.
Here ``super-stable'' means that arrival and service rates are such that the queue remains stable even if \(c-1\) of the servers are removed.

Sigman's work
uses the fact that the \(M/G/c\) queue is dominated by an \(M/G/1\) queue under \(FCFS\) discipline (equivalent in this single-server case to First In First Out or \(FIFO\) discipline), which itself is stable by virtue of super-stability of the original queue;
the trajectory of an equilibrium instance of the dominating queue can be generated from times in the arbitrarily distant past using the observation that the queue workload does not depend on service discipline,
and therefore these trajectories can be reconstructed from a dynamically reversible relative which uses a Processor Sharing (PS) discipline. Thus the strategy is:
 \begin{enumerate}
  \item Given the same inputs, the total workload of a super-stable $M/G/c$ queue is sample-wise dominated by the total workload of an $M/G/1$ queue;
\item Using the processor sharing discipline, an $M/G/1\; [PS]$ queue is dynamically reversible in time (and has the same workload process as the corresponding $M/G/1\; [FIFO]$ queue);
\item Thus the $M/G/1\; [PS]$ queue can be used as a dominating process;
\item Coalescence occurs when the $M/G/1\; [PS]$ queue empties;
\item The workload process of the $M/G/1\; [PS]$ queue can be decoded to generate the arrival times and service durations of the underlying arrival process.
 \end{enumerate}
The second moment of the service duration must be finite if the algorithm is to have finite mean run-time.

Sigman's approach \cite{Sigman-2011} is limited to the super-stable case by the need to dominate the target queue using a simple \(M/G/1\) queue. 
A different approach \cite{Sigman-2012} uses regenerative techniques to extend to the merely stable case, but this different approach unfortunately results in run-times of infinite mean \cite{Sigman-2013}.
In the present paper we show how to generalize the approach of \cite{Sigman-2011} to deal with stable \(M/G/1\) queues. 
The essence of the idea is to replace the dominating \(M/G/1\) queue by an \(M/G/c\) queue run with random assignment (\(RA\)) 
(this may also be viewed as an independent collection of \(c\) different \(M/G/1\) queues; hence we write it indifferently as \(M/G/c\; [RA]=[M/G/1]^c\));
and then to use  Wolff's observation \cite{Wolff-1977} that sample-wise monotonicity between \(M/G/c\; [FCFS]\) and \([M/G/1]^c\) can be arranged if service durations are assigned in order of initiation of service.
The idea is simple enough: however considerable care needs to be exercised in order to ensure that the dominating process really does dominate the target chain in an appropriate sense. (This idea, which forms the basis of our Algorithm~\ref{alg:1}, was proposed independently in~\cite{BlanchetWallwater-2014}, although that paper does not contain a proof of the algorithm's correctness.) Moreover, in order to achieve smaller run-times by using a refined algorithm,
it is necessary to show that
appropriate ordering (or \emph{sandwiching}) relationships are maintained between upper and lower processes started at different initial times \(-T\). 
The attraction of this extension to  Sigman's work \cite{Sigman-2011} is that it allows simulation methods to be applied precisely in the case when \(M/G/c\) queues will be most relevant, namely when using a single server (\(c=1\)) would result in loss of stability.

It is appropriate here to mention some further related papers on perfect simulation and queueing. 
 Fernandez et al. \cite{FernandezFerrariGarcia-2002} study exclusion models via ensembles of Peierls contours in a spatial problem as a kind of spatially distributed loss network; however the methods are specific to loss networks with Poisson inputs and Exponential lifetimes.
Blanchet and Dong \cite{BlanchetDong-2013} apply dominated CFTP to a $GI/GI/c/c$ \emph{loss} process, using a $GI/GI/\infty$ queue as dominating process. Rather than waiting until the dominating process empties (a time which generally grows exponentially in the arrival rate), they look for a time interval $[a,b]$ for which all customers present at time $a$ have departed by time $b$, and over the entirety of which the infinite server system has less than $c$ customers. The two processes will have coalesced by time $b$, and so coalescence is determined by watching the dominating process alone. 
The authors overcome a significant technical difficulty in \cite{BlanchetDong-2013} by showing how to simulate a renewal process input in reverse time; however their method for coupling target and dominating processes involves truncation of an infinite server system, and this cannot be applied in our \(M/G/c\) context.
Mousavi and Glynn \cite{MousaviGlynn-2013} discuss perfect simulation for reflected Brownian motion in a wedge (consequently gaining information on heavy traffic approximation for queues). 
Attention is focused on stochastic differential equation problems, and links are made with the approach of \cite{BeskosRoberts-2005a} to exact simulation for solutions to stochastic differential equations. Blanchet and Chen~\cite{BlanchetChen-2014} show how to perform perfect simulation for the workload vector of a network of $d$ queueing stations with Poisson inputs, in which each arrival brings a vector of service times describing the additional work to be carried out at each station. This requires the existence of a finite moment generating function for the vector of service times, although this is subsequently relaxed in~\cite{BlanchetWallwater-2014}.

We conclude this introductory section by setting out the plan of the paper.
Section \ref{sec:dynamic} reviews notation and fundamental facts for our target queue \(M/G/c\) and the intended dominating process \([M/G/1]^c\).
Section \ref{sec:domination} describes extensions (Theorems \ref{thm:path-wise-domination1}, \ref{thm:path-wise-domination2}) of a classical domination result from queueing theory (Theorem \ref{thm:classical-domination}),
which prove the queue comparisons necessary to establish the required domination relationships.
Section \ref{sec:domCFTP} provides a proof of a simple dominated CFTP algorithm (Algorithm \ref{alg:1}) based on the regeneration which happens when the dominating process \([M/G/1]^c\) empties. 
However the run-time for this algorithm will be large in cases when the target process \(M/G/c\; [FCFS]\) rarely empties, which is precisely the set of circumstances for which multi-server queues have practical utility!
Section \ref{sec:sandwich} describes and proves the validity of a refined algorithm (Algorithm \ref{alg:2}), based on the sandwiching of the target process between pairs of upper and lower processes themselves generated from the dominating process: the regeneration used in the simple algorithm is replaced by consideration of when these upper and lower processes agree at time zero. 
At the price of increased complexity (proving domination relationships hold not just between lower, target, upper and dominating process, but also between different pairs of upper and lower process),
the algorithm run-time can be substantially decreased.
Empirical demonstrations of the savings which can be obtained, as well as the correctness of the algorithm in the computable \(M/M/c\) case, are demonstrated by representative simulations in Section \ref{sec:mmc}.
Finally, Section \ref{sec:conclusion} discusses further research possibilities.


\section{Dynamic reversibility}\label{sec:dynamic}

In this section we work with the \(M_\lambda/G/c\) queue with arrival rate \(\lambda\) under two different allocation rules: 
\emph{first-come-first-served}
(\(M_\lambda/G/c\; [FCFS]\))
and \emph{random assignment} (\(M_\lambda/G/c\; [RA]\)); in this second case each arrival is assigned randomly and independently to
one of the \(c\) servers without regard to load on each server. 
The \(M_\lambda/G/c\; [RA]\) queue will serve as a dominating process for the dominated CFTP algorithm 
to be described in later sections. 
The queue \(M_\lambda/G/c\; [RA]\) may be viewed as a system of \(c\) independent \(M_{\lambda/c}/G/1\; [FCFS]\) queues, each with arrival 
rate \(\lambda/c\). 
To emphasize this, we sometimes write \(M_\lambda/G/c\; [RA]\) as \([M_{\lambda/c}/G/1]^c\).

We follow the notation of \cite{Sigman-2011} and \cite{Asmussen-2003}. Firstly, we consider a general $\cdot/\cdot/c\; [FCFS]$ queue and review the  Kiefer-Wolfowitz construction of a workload vector \cite{KieferWolfowitz-1955}.
Let ${\mathbf V}(t) = (V(1,t),V(2,t),\dots, V(c,t))$ denote the workload vector at time \(t\geq0\). 
To be explicit, the $V(1,t)\leq V(2,t) \leq\dots$ represent the ordered amounts of \emph{residual} work in the system for the $c$ servers at time $t$, bearing in mind the FCFS queueing discipline. Customer \(n\) arrives at time $t_n$ (for \(0\leq t_1\leq t_2\leq \ldots\)). Inter-arrival times are $T_n = t_{n+1}-t_n$ (where we set $t_0 = 0$). 
Observing $\mathbf V$ just before arrival of the \(n^\text{th}\) customer (but definitely after the arrival of the \((n-1)^\text{st}\) customer) generates the process $\mathbf W_n$: in case \(t_{n-1}<t_n\) we have
$\mathbf W_n= \mathbf V(t_n-)$. This satisfies the
Kiefer-Wolfowitz recursion
\begin{equation}\label{eqn:KW-recursion}
\mathbf W_{n+1} \quad=\quad R(\mathbf W_n + S_n\mathbf e - T_n\mathbf f)^+ ,\quad \text{ for }n\geq 0\,, 
\end{equation}
where
\begin{itemize}
 \item \(\mathbf W_n + S_n\mathbf e\) adds \(S_n\) to the first coordinate only of the vector \(\mathbf{W_n}\), 
 \item \(\mathbf W_n + S_n\mathbf e - T_n\mathbf f\) subtracts \(T_n\) from each of the coordinates of \(\mathbf W_n + S_n\mathbf e\),
 \item \(R(\mathbf W_n + S_n\mathbf e - T_n\mathbf f)\) reorders the coordinates of the vector \(\mathbf W_n + S_n\mathbf e - T_n\mathbf f\) in increasing order,
 \item and \(R(\mathbf W_n + S_n\mathbf e - T_n\mathbf f)^+\) replaces negative coordinates of \(R(\mathbf W_n + S_n\mathbf e - T_n\mathbf f)\) by zeros. 
\end{itemize}
Since each of these operations is a coordinate-wise monotonic function of the previous workload vector \(\mathbf{W}_n\) and the service duration \(S_n\), an argument from recursion 
shows that the coordinates of \(\mathbf{W}_n\) depend monotonically on the initial workload vector and the sequence of service durations, once the arrival time sequence is fixed. See also, for example, remarks in  \cite{Moyal-2013} on Join Shortest Workload (\(JSW\)) disciplines for systems of parallel \(FIFO\) queues -- corresponding to \(\cdot/\cdot/c\; [FCFS]\). If \(t_n\leq t<t_{n+1}\) then we obtain \(\mathbf V(t)\) from \(\mathbf{W}_n\) by subtracting \(t-t_n\) from all the workload components and then taking positive parts: 
\begin{equation}\label{eqn:KW-recursion-cts}
{\mathbf V}(t) \quad=\quad ({\mathbf W}_n - (t-t_n)\mathbf f)^+ \,.
\end{equation}
Arguing as before, the coordinates of \(\mathbf{V}(t)\) depend monotonically on the initial workload vector and the sequence of service durations, once the arrival time sequence is fixed.

We are specifically interested in the $M_\lambda/G/c\; [FCFS]$ queue with arrival rate $\lambda$ and independent and identically distributed service durations $S_n$. 
Let \(G\) be the common distribution of the \(S_n\), and set $\Expect S = 1/\mu$. We shall assume throughout that $\Expect{S^2}<\infty$, in order to guarantee finite mean run-time of our algorithms (as detailed in Section~\ref{sec:mmc}). Write $\rho = \lambda/\mu$; we consider the stable case \(\rho<c\). We will compare this to the \([M_{\lambda/c}/G/1]^c\) system with total arrival rate \(\lambda\) and service durations as above.
That is, rather than operating under FCFS, we assign incoming customers to one of $c$ independent $M/G/1$ queues uniformly at random. Each of these queues sees arrivals at rate $\lambda/c$ and therefore has sub-critical traffic intensity \(\lambda/(c\mu)\). 
As noted in \cite{Sigman-2011}, it is a classical fact from queueing theory that the workload of an individual $M_{\lambda/c}/G/1$ queue is invariant under changes of work-conserving discipline. 
We can exploit this by using the \emph{processor sharing} discipline (PS),
since under this discipline the single-server queue workload vector process can be viewed as dynamically reversible \cite[Section 5.7.3]{Ross-1996}. This means that
the reverse process is a system of the same type, with customers again arriving at a Poisson rate $\lambda/c$, 
and with workloads having the same distribution $G$ as $S_n$, 
but with the state now representing the amount of work \emph{already performed} on customers still in the system. Since each of the $c$ independent copies of $M_{\lambda/c}/G/1$ is dynamically reversible under PS, it follows that the \(M_\lambda/G/c\;[RA]=[M_{\lambda/c}/G/1]^c\) queue is itself dynamically reversible under PS applied to each component queue.


\section[Domination of M/G/c]{Domination of $M_\lambda/G/c$}\label{sec:domination}

In this section we develop results based on the observation  \cite{Wolff-1987} that 
it is possible to arrange for the \(M_\lambda/G/c\) queue to be \emph{path-wise} dominated by 
\(c\)-server queues using other queueing disciplines,
if the two queues are coupled by listing initiations of service in order and assigning the same service duration to the \(n^\text{th}\) initiation
of service in each queue. 
(As noted below, this assignation in order of initiation of service is crucial.)
The fundamental idea is to establish that the non-FCFS system completes less total work by any fixed time, since 
corresponding services initiate later (when listed in order of initiation as above). 

Let $Q_t$ denote the queue length at time $t$, and write $|\mathbf V(t)| = V(1,t) + \dots + V(c,t)$ for the total workload (remaining work) at time $t$.
We begin by citing a classic result proved in queueing theory monographs.
\begin{thm}{\cite[Chapter XII]{Asmussen-2003}}\label{thm:classical-domination}
We consider an \(M_\lambda/G/c\) queueing system under various queueing disciplines.
We use
\(\leq_\textup{so}\) to refer to stochastic ordering of distribution functions, and use
tildes to refer to quantities pertaining to the system when it evolves under a possibly non-FCFS allocation rule; 
unadorned quantities pertain to the system when it evolves under an FCFS
allocation rule. For any (possibly non-FCFS) allocation rule, it holds for initially empty systems that 
\[ 
Q_t \leq_{\textup{so}} \tilde Q_t \qquad\text{  and  }\qquad |\mathbf V(t)| \leq_{\textup{so}} |\tilde{\mathbf V}(t)| \qquad \text{  for all $t\geq 0$.} 
\]
Similarly, $|\mathbf W_n|  \leq_{\textup{so}}  
|\tilde{\mathbf W}_n|$ for all $n$.
\end{thm}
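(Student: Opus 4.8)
The plan is to prove all three stochastic comparisons by exhibiting an explicit coupling of the two systems on a common probability space, under which the FCFS system is dominated path-wise (in a suitable sense). First I would drive both systems with the \emph{same} Poisson arrival stream, with epochs $0\le t_1\le t_2\le\cdots$, and with a single i.i.d.\ sequence $S_1,S_2,\dots$ of $G$-distributed service requirements. Following Wolff, the essential device is to attach $S_n$ to the $n$-th \emph{initiation of service} --- counted in increasing time order across all $c$ servers, and \emph{separately} in each of the two systems --- rather than to the $n$-th arriving customer. This is a legitimate coupling of the two service streams because the identity of the customer who commences the $n$-th service is, under any allocation rule, a function of the arrival epochs, of $S_1,\dots,S_{n-1}$ and of any auxiliary randomization of the rule --- but crucially not of $S_n,S_{n+1},\dots$; hence we may reveal $S_n$ only at the instant the $n$-th service begins, leaving the marginal law of each system unchanged. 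Write $\sigma_n$ (resp.\ $\widetilde\sigma_n$) for the epoch of the $n$-th service initiation in the FCFS (resp.\ non-FCFS) system, and $D(t),C(t)$ (resp.\ $\widetilde D(t),\widetilde C(t)$) for the numbers of initiations and of completions by time $t$; note that under FCFS the $n$-th initiation is the $n$-th customer, whose service then runs to completion without interruption, so that it departs at exactly $\sigma_n+S_n$.

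The crux is the claim that services start no later, and work is performed no later, under FCFS: explicitly, $\sigma_n\le\widetilde\sigma_n$ for every $n$ --- equivalently $\widetilde D(t)\le D(t)$ for every $t$ --- and, proved simultaneously, $\widetilde C(t)\le C(t)$ for every $t$. I would establish this by induction over the successive event epochs (an arrival, a service initiation, or a completion in either system), carrying the joint invariant that $\widetilde D(\cdot)\le D(\cdot)$, $\widetilde C(\cdot)\le C(\cdot)$, and $\sigma_j\le\widetilde\sigma_j$ for every index $j$ that has already been initiated. The one step needing genuine care is an arrival epoch $t$ at which the non-FCFS system initiates a service, say of index $m=\widetilde D(t-)+1$: one must check $D(t)\ge m$. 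Here work-conservation of the FCFS discipline enters --- at $t-$ either FCFS has a free server, so it too initiates and $D(t)=D(t-)+1\ge\widetilde D(t-)+1=m$; or all $c$ FCFS servers are busy, i.e.\ $D(t-)-C(t-)=c$, in which case, since a service can start in the non-FCFS system only if it has a free server, $\widetilde D(t-)-\widetilde C(t-)<c$, and combining with the invariant $\widetilde C(t-)\le C(t-)$ gives $m-1=\widetilde D(t-)<\widetilde C(t-)+c\le C(t-)+c=D(t-)\le D(t)$. At a completion in the non-FCFS system of some index $n$, the departing customer has received all $S_n$ units of its requirement over an interval of length at most $t-\widetilde\sigma_n$, whence $\sigma_n+S_n\le\widetilde\sigma_n+S_n\le t$; since $\sigma_n+S_n$ is exactly the departure epoch of the corresponding FCFS customer, that customer has already left, so $\widetilde C$ cannot overtake $C$. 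The remaining transitions are routine. It is convenient here to assume that no two of the countably many arrival and departure epochs coincide (automatic when $G$ has no atoms), the general case following by a standard tie-breaking or limiting argument. This bookkeeping is where I expect the only real difficulty to lie.

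Granting the crux, the three conclusions follow readily. For the queue length, $Q_t=A(t)-C(t)$ and $\widetilde Q_t=A(t)-\widetilde C(t)$ with the arrival count $A(\cdot)$ common to both systems, so $\widetilde C(t)\le C(t)$ gives $Q_t\le\widetilde Q_t$ path-wise, and a fortiori in distribution. For the total workload, the work performed by time $t$ on the customer carrying $S_n$ equals exactly $\min\{S_n,(t-\sigma_n)^+\}$ under FCFS but is at most $\min\{S_n,(t-\widetilde\sigma_n)^+\}\le\min\{S_n,(t-\sigma_n)^+\}$ under the other rule; summing over the initiated customers and using $\widetilde D(t)\le D(t)$ shows that FCFS has performed at least as much work by time $t$, and then $|\mathbf V(t)|\le_{\textup{so}}|\widetilde{\mathbf V}(t)|$ follows once one accounts for the work still carried by customers not yet in service, whose requirements are i.i.d.\ draws from $G$ in either system --- a short exchangeability argument whose need is precisely why this comparison is stated in distribution rather than path-wise. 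Finally $|\mathbf W_n|=|\mathbf V(t_n-)|$, so conditioning on the (common) arrival process reduces $|\mathbf W_n|\le_{\textup{so}}|\widetilde{\mathbf W}_n|$ to the workload comparison at the then-deterministic epoch $t_n-$. Throughout, the hypothesis that both systems start empty anchors the induction at $t=0$.
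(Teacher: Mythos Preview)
The paper does not supply its own proof of this theorem: it is quoted as a classical result from Asmussen's monograph, and the subsequent Theorems~\ref{thm:path-wise-domination1} and~\ref{thm:path-wise-domination2} are then proved as path-wise extensions. Your proposal is correct and follows the standard Wolff--Asmussen route; the inductive core --- that the \(n^\text{th}\) initiation and the \(n^\text{th}\) completion both occur no later under FCFS --- is exactly what the paper establishes in its proof of Theorem~\ref{thm:path-wise-domination2}, via the representations \(J_{m+c}=\max\{t_{m+c},D_m\}\) (with equality under FCFS, only an inequality otherwise) and \(D_m=\min^{(m)}\{J_1+S_1,\ldots,J_{m+c-1}+S_{m+c-1}\}\). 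You phrase the same induction in terms of the counting processes \(D(t),C(t)\) rather than the ordered time sequences \(J_m,D_m\); these are dual formulations of one argument.

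One small point: your detour through an exchangeability argument for the workload comparison is unnecessary if you adopt the convention spelled out in the paper's paragraph immediately following the theorem, namely adding \(S_n\) to the total workload at the \(n^\text{th}\) \emph{arrival} epoch while reserving it for use at the \(n^\text{th}\) \emph{initiation}. Under that convention the cumulative work input by time \(t\) is \(\sum_{i\le A(t)}S_i\) in both systems; since you have already shown that FCFS has performed at least as much work by time \(t\), the path-wise inequality \(|\mathbf V(t)|\le|\tilde{\mathbf V}(t)|\) follows directly, and stochastic ordering is immediate because this convention preserves the marginal law of each system (for precisely the reason you give in your first paragraph).
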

Note that the concept of a Kiefer-Wolfowitz workload vector is not well-defined for general non-FCFS queues: nevertheless the total amount of work \(|\tilde{\textbf{V}}(t)|\), respectively \(|\tilde{\textbf{W}}_n|\), can be defined unambiguously as the total amount of residual work currently in the system. (Indeed, given an arrival at time $t_n$ we can generate a service duration $S_n$ and increase the total workload by this amount: $|\mathbf V(t_n)| = |\mathbf V(t_n-)| + S_n$. Speaking algorithmically, this service duration is then stored ready for use at the time of the $n^{\text{th}}$ initiation of service; in non-FCFS queues this will not typically be the time at which the $n^{\text{th}}$ customer to arrive commences service.)

It follows immediately that the queue length and residual workload of the $M/G/c$ queue under FCFS 
are stochastically dominated by those of the same queue with any alternative allocation rule. 
In fact this result generalizes to general $GI/G/c$ queues. 
However the result does {\bf not} carry over to domination in the sense of sample paths
if the corresponding coupling assigns the same service duration to the same individual (where ``same'' means same in order of arrival); see Wolff's correction \cite{Wolff-1987}
of \cite{Wolff-1977}.
To establish such a domination, one has to take some care to link service durations between the two different systems in the right way, namely, to ensure
that the same service duration is assigned to the \(n^\text{th}\) initiation
of service in each queue. For the purposes of our dominated CFTP argument, we need to generalize this result to cases when the allocation rule may change
at some fixed time, and also to certain cases where each of an initial subsequence of service durations is reduced to zero 
(this device allows us to include cases
in which one of the systems is not empty at time zero).

The argument given below is a modest extension of that of Asmussen \cite[Chapter XII]{Asmussen-2003}, but is central to the arguments of later sections
of the current paper.
\begin{thm}\label{thm:path-wise-domination1}
Consider a FCFS \(c\)-server queueing system viewed as a function of (a) the sequence of arrival times,
(customers arriving at times \(0\leq t_1\leq t_2\leq t_3\leq \ldots\))
and (b) the sequence of service durations \(S_1, S_2, S_3, \ldots\) assigned \emph{in order of initiation of service}
(positive except for a possible initial subsequence of zeros).
Then this system depends monotonically on the inputs \(0\leq t_1\leq t_2\leq t_3\leq \ldots\) and service durations
\(S_1, S_2, S_3, \ldots\), in the sense that for each \(m\) the \(m^\text{th}\) initiation of service \(J_m\) and the \(m^\text{th}\)
time of departure \(D_m\) are increasing functions of these inputs. Moreover, if the arrival times are fixed then for each \(t\geq0\) the Kiefer-Wolfowitz workload vector \({\mathbf V}(t)\)
(considered coordinate-by-coordinate) depends monotonically on the initial workload vector (measured immediately after time \(0\)) and the sequence of service durations corresponding to non-zero arrival times.
\end{thm}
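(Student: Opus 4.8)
The plan is to set up an explicit forward recursion for the FCFS dynamics in coordinates for which coordinate-wise monotonicity of each elementary step is transparent, and then to run an induction. The one genuine point of care is the choice of coordinates: the Kiefer--Wolfowitz recursion \eqref{eqn:KW-recursion} is driven by the inter-arrival times $T_n=t_{n+1}-t_n$, and increasing a single $t_m$ raises $T_{m-1}$ while lowering $T_m$, so monotonicity in the individual arrival times is not visible there. Instead I would track the \emph{absolute} server-availability vector $\mathbf A_m=(A_m(1)\le\dots\le A_m(c))$ recording, immediately after the $m^\text{th}$ customer has been assigned to a server, the times at which the $c$ servers next become free; this vector is non-decreasing in every arrival time $t_i$, whereas the Kiefer--Wolfowitz vector $\mathbf W_{m+1}=(\mathbf A_m-t_{m+1}\mathbf f)^+$ (already reordered) need not be.

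First I would note that under FCFS customers enter service in order of arrival, so ``$m^\text{th}$ initiation of service'' may be read as ``$m^\text{th}$ arrival'', with a fixed tie-breaking rule for simultaneous arrivals, and the system starts from $\mathbf A_0=(0,\dots,0)$. The dynamics are then
\[
 J_m \;=\; \max\{t_m,\,A_{m-1}(1)\}\,,\qquad D_m \;=\; J_m + S_m\,,\qquad \mathbf A_m \;=\; R\bigl(J_m+S_m,\,A_{m-1}(2),\,\dots,\,A_{m-1}(c)\bigr)
\]
when $S_m>0$, while a customer with $S_m=0$ occupies no server, so $\mathbf A_m=\mathbf A_{m-1}$ (still with $J_m=\max\{t_m,A_{m-1}(1)\}$ and $D_m=J_m$). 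Each of the maps $\max\{t_m,\cdot\}$, addition of $S_m$, and the increasing-reorder $R$ is coordinate-wise non-decreasing in its arguments --- exactly the elementary fact already invoked for \eqref{eqn:KW-recursion} --- and the case split at $S_m=0$ is consistent with monotonicity in $S_m$ (reducing $S_m$ to $0$ only decreases $\mathbf A_m$, in line with how the initial subsequence of zeros is introduced). An induction on $m$ then gives that $\mathbf A_m$ coordinate by coordinate, and hence $J_{m+1}$ and $D_{m+1}$, are non-decreasing functions of $t_1,\dots,t_{m+1}$ and $S_1,\dots,S_{m+1}$, which is the first assertion. (If one instead reads $D_m$ as the $m^\text{th}$ departure \emph{epoch}, it is the $m^\text{th}$ order statistic of $\{J_k+S_k\}$, still non-decreasing since order statistics of non-decreasing functions are non-decreasing.)

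For the second assertion the arrival times are frozen, so \eqref{eqn:KW-recursion} is driven only by the constants $T_n$ together with the service durations, and I would apply the same induction directly to it: $\mathbf W_{n+1}=R(\mathbf W_n+S_n\mathbf e-T_n\mathbf f)^+$ is obtained from $\mathbf W_n$ and $S_n$ by adding $S_n$ to one coordinate, subtracting a fixed vector, reordering, and truncating at $0$, each step coordinate-wise non-decreasing in the coordinates of $\mathbf W_n$ and in $S_n$. Taking $\mathbf W_0$ to be the workload vector measured immediately after time $0$ --- so that the work of any customers arriving at time $0$, and the initial zero durations, is absorbed into $\mathbf W_0$, leaving the durations attached to strictly positive arrival times as the remaining inputs --- the induction gives $\mathbf W_n$ coordinate-wise non-decreasing in $\mathbf W_0$ and $S_0,\dots,S_{n-1}$; then $\mathbf V(t)=(\mathbf W_n-(t-t_n)\mathbf f)^+$ via \eqref{eqn:KW-recursion-cts} preserves coordinate-wise monotonicity, giving the claim.

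The verification that $\max$, coordinate-wise addition, the reorder $R$, and the positive part are coordinate-wise monotone is entirely routine (and is the observation the paper already makes for \eqref{eqn:KW-recursion}); the points needing thought --- and hence the main obstacle to a clean proof --- are (i) choosing the absolute-time availability vector $\mathbf A_m$ rather than the inter-arrival-time-driven recursion, so that monotonicity in each individual $t_m$ is manifest, and (ii) the bookkeeping around zero service durations and the initial state, so that the two assertions refer to mutually consistent notions of ``input''.
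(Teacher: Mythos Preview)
Your argument is correct and, for the first assertion, takes a genuinely different route from the paper's. The paper does not introduce an availability vector; instead it works directly with the identity $J_{m+c}=\max\{t_{m+c},D_m\}$ (with $D_m$ the $m^\text{th}$ departure epoch, equality holding under FCFS) together with the finite order-statistic representation $D_m=\min^{(m)}\{J_1+S_1,\dots,J_{m+c-1}+S_{m+c-1}\}$, and runs an induction on the pair of sequences $(D_u)_{u\le m-1}$, $(J_v)_{v\le m+c-1}$. Your absolute-time vector $\mathbf A_m$ is cleaner bookkeeping for this theorem on its own: it makes monotonicity in each individual $t_m$ transparent, which, as you observe, the inter-arrival-driven Kiefer--Wolfowitz recursion does not. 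The paper's decomposition, however, is chosen with Theorem~\ref{thm:path-wise-domination2} in view: the point there is that $J_{m+c}\ge\max\{t_{m+c},D_m\}$ holds for \emph{any} allocation policy, with equality precisely under FCFS, and this inequality-versus-equality is what drives the comparison of FCFS against non-FCFS disciplines. Your recursion $J_m=\max\{t_m,A_{m-1}(1)\}$ is specific to FCFS, so the availability-vector approach would need to be reworked to serve that later purpose. For the second assertion your proof and the paper's coincide. One small slip: your special case $\mathbf A_m=\mathbf A_{m-1}$ when $S_m=0$ is not literally what the uniform formula gives (if $t_m>A_{m-1}(1)$ the first coordinate moves up to $J_m=t_m$), but the uniform formula already handles $S_m=0$ and is monotone, so the case split is unnecessary rather than wrong.
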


\begin{proof}
As noted in the discussion of Equations \eqref{eqn:KW-recursion} and \eqref{eqn:KW-recursion-cts} above, the coordinates of the Kiefer-Wolfowitz workload vector depend monotonically on the initial workload vector and the sequence of service durations once the sequence of arrival times is fixed. This settles the second part of the theorem. It remains to prove the first part.

Let \(\wt_m\geq t_m\), \(\wS_m\geq S_m\), \(\wJ_m\), \(\wD_m\) be the time of arrival, the service duration, the time of initiation of service
and the time of departure for the \(m^\text{th}\) individual of the comparison FCFS system. 
We need to show that \(\wJ_m\geq J_m\) and \(\wD_m\geq D_m\) for all \(m\).
We have stipulated that
the two sequences of service durations \(\{S_m:m\geq1\}\) and \(\{\wS_m:m\geq1\}\) contain positive service 
durations except for initial subsequences of zeros. 

We will use an inductive proof, and prepare for this by establishing useful representations of \(D_m\), \(\wD_m\)  and \(J_{m+c}\), \(\wJ_{m+c}\).
We concentrate on \(D_m\), \(J_{m+c}\) \emph{etc}, for simplicity.
First note for \emph{any} allocation policy
\begin{equation}\label{eq:key}
 J_{m+c} \quad\geq\quad \max\{t_{m+c}, D_{m}\}\,,
\end{equation}
with equality holding in the case when the FCFS policy applies,
since the $(m+c)$th service starts either when the $(m+c)$th customer arrives (if there
is a spare server, which is to say when \(D_m\leq t_{m+c}\)), or if not then service initiates exactly when the relevant departure frees up a server: this happens at time $D_{m}$ in the case of the FCFS policy, and otherwise can happen no earlier.

On the other hand
the \(m^\text{th}\) departure time \(D_m\) is given by
\begin{equation}
   D_m \quad=\quad \min{}^{(m)}\{  J_1+  S_1,   J_2+  S_2,   J_3+  S_3,\dots \}\,,
\label{eq:departure0}
\end{equation}
where $\min{}^{(m)}$ denotes the operation that returns the $m^\text{th}$ order statistic. We now refine this
so as to involve only finitely many times of completions of service on the right-hand side.

First note that only at most \(c\) customers can actually be in service at time \(J_{m+c}\). Therefore
\begin{equation}\label{eq:basic}
D_m \quad\leq\quad J_{m+c}\,.
\end{equation} 
It follows from this that
\begin{equation}\label{eq:cap}
 D_m \quad\leq\quad J_{m+c} \quad\leq\quad J_{m+c+r} \quad\leq\quad J_{m+c+r}+S_{m+c+r} \qquad \text{ whenever } r \geq0\,.
\end{equation}
If \(S_{m+c}>0\) then also \(S_{m+c+r}>0\) for \(r\geq0\), so in this case \(D_m\leq J_{m+c+r}<J_{m+c+r}+S_{m+c+r}\) for \(r\geq0\).
Thus in this case we can improve on \eqref{eq:departure0} and write \(D_m\) in terms of an order statistic over a specific finite population:
\begin{equation}\label{eq:departure}
  D_m \quad=\quad \min{}^{(m)}\{J_1+S_1, J_2+S_2, J_3+S_3,\dots, J_{m+c-1}+S_{m+c-1}\}\,.
\end{equation} 
On the other hand, if \(S_{m+c}=0\) then also \(S_1=S_2=S_3=\ldots=S_{m+c}=0\). In that case service is immediate on arrival, 
so \(t_m=J_m=J_m+S_m=D_m\), and so \eqref{eq:departure} still holds (noting that there must be at least one server, so \(c\geq1\)).

Consider the inductive hypothesis that \(\wD_u\geq D_u\) for \(u=1,\ldots,m-1\) and \(\wJ_v\geq J_v\) for \(v=1,\ldots,m+c-1\).
This holds for \(m=1\), since under FCFS the first \(c\) people are served at their arrival times, so \(\wD_u=\wt_u+\wS_u\geq t_u+S_u\) for \(u=1, \ldots, c\). Suppose the inductive hypothesis holds for \(m=n\). Then we can apply the monotonic formulae \eqref{eq:key}, \eqref{eq:departure}
and deduce that the inductive hypothesis holds for case \(m=n+1\) too. Thus the first part of the theorem follows by mathematical induction.
\end{proof}

Consider two instances of \(M/G/c\;[FCFS]\), coupled monotonically using the construction implied in Theorem \ref{thm:path-wise-domination1}, based on the same sequence of arrival times, 
using sequences of service durations that agree once arrival times become positive, and such that the Kiefer-Wolfowitz workload vector of one strictly dominates that of the other at time \(0+\).
We can remark that the queues can couple
successfully (which is to say, attain the same state at the same time, called the \emph{coupling time})
only at a time when both instances have idle servers.
For the monotonicity implies that one has total workload strictly larger than the other up to the time when they first couple
successfully.
Since arrival times are fixed and shared by both systems, 
successful 
coupling of the two processes cannot occur at the time of an arrival (which simply increases the workload by equal amounts for each queue).
On the other hand, if both queues have \(c\) or more individuals in the system then the workloads decrease at the same rate.
It follows that 
successful
coupling will occur at a time when (a) an arrival does not happen, (b) there are strictly fewer than \(c\) individuals in the smaller system (hence, the smaller system has an idle server).
The coupling implies that at the coupling time the same will be true of the larger system (that is, it too will have an idle server).

\begin{thm}\label{thm:path-wise-domination2}
Consider a \(c\)-server queueing system viewed as a function of (a) the sequence of arrival times,
(customers arriving at times \(0\leq t_1\leq t_2\leq t_3\leq \ldots\))
and (b) the sequence of service durations \(S_1, S_2, S_3, \ldots\) assigned \emph{in order of initiation of service}
(positive except for a possible initial subsequence of zeros).
Consider the following cases of different allocation rules, 
in some cases varying over time:  
\begin{enumerate}
\item $\cdot/\cdot/c\;[RA]$;
\item $\cdot/\cdot/c\; [RA]$ until a specified non-random time $T$, then switching to $\cdot/\cdot/c\; [FCFS]$;
\item $\cdot/\cdot/c\; [RA]$ until a specified non-random time \(T'\), $0\leq T'\leq T$, then switching to $\cdot/\cdot/c\; [FCFS]$;
\item $\cdot/\cdot/c\; [FCFS]$;
\end{enumerate}
For the sake of an explicit construction,
when initiations of service tie then we break the ties using order of arrival time. 
On change of allocation rule to FCFS, 
customers in system but not yet being served are placed at the front of the queue in order of arrival-time; service initiates immediately for the appropriate number of customers if there are servers free.
If all this holds then case \(k\) dominates case \(k+1\), in the sense that the \(m^\text{th}\) initiation of service in case \(k+1\) occurs no later than the \(m^\text{th}\)
initiation of service in case \(k\), and the \(m^\text{th}\) departure in case \(k+1\) occurs no later than the \(m^\text{th}\)
departure in case \(k\). 

Moreover, for all times $t\geq T'$ the Kiefer-Wolfowitz workload vector for case 3 dominates (coordinate-by-coordinate) that of case 4, with similar domination holding for cases 2 and 3 for all $t\geq T$.
\end{thm}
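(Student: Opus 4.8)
The plan is to imitate the inductive argument of Theorem~\ref{thm:path-wise-domination1}, exploiting the fact that the representations \eqref{eq:key} and \eqref{eq:departure} hold for \emph{every} allocation rule, not merely FCFS: \eqref{eq:key} holds with inequality in general, while the derivation of \eqref{eq:departure} used only \eqref{eq:basic} and the hypothesis that the $S_n$ are eventually positive, both of which are discipline-free. Throughout, I would couple the four cases by the same arrival times, the same allocation choices for as long as each case is running RA, and the service duration $S_n$ for the $n^{\text{th}}$ initiation of service in each case; write $J^{(k)}_m$, $D^{(k)}_m$ for the $m^{\text{th}}$ initiation and departure times in case $k$, and set $D_j:=0$ for $j\le0$. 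With this coupling, cases $k$ and $k+1$ evolve identically up to the time $\tau_{k+1}$ at which case $k+1$ switches to FCFS, where $\tau_1=\infty$, $\tau_2=T$, $\tau_3=T'$, $\tau_4=0$ (so $\tau_{k+1}\le\tau_k$); in particular $J^{(k+1)}_n=J^{(k)}_n$ whenever $J^{(k+1)}_n<\tau_{k+1}$.

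To prove the claim about $J_m$ and $D_m$, the essential extra ingredient is a sharpening of \eqref{eq:key} for case $k+1$ once it is running FCFS: if $J^{(k+1)}_n\ge\tau_{k+1}$, then $J^{(k+1)}_n=\max\{t_{q(n)},D^{(k+1)}_{n-c}\}\le\max\{t_n,D^{(k+1)}_{n-c}\}$, where $q(n)$ is the arrival-index of the customer receiving the $n^{\text{th}}$ initiation. Here $q(n)\le n$: after the switch all customers not yet served are processed in arrival-time order, so every customer with arrival-index below $q(n)$ has already begun service before the $n^{\text{th}}$ initiation, giving at least $q(n)-1$ initiations strictly before the $n^{\text{th}}$. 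Moreover, if the customer receiving the $n^{\text{th}}$ initiation had to wait for a server, then just beforehand all $c$ servers were occupied with $n-1$ services begun, so exactly $n-c$ had departed --- the same count as underlies \eqref{eq:basic}. Given this, I would run induction on $n$ with hypothesis ``$J^{(k+1)}_i\le J^{(k)}_i$ for every $i<n$'', which already forces $D^{(k+1)}_j\le D^{(k)}_j$ for $j\le n-c$ via the monotone order-statistic formula \eqref{eq:departure} (the two cases share the same service durations). For the inductive step: if $J^{(k+1)}_n<\tau_{k+1}$ then $J^{(k+1)}_n=J^{(k)}_n$; otherwise $J^{(k+1)}_n\le\max\{t_n,D^{(k+1)}_{n-c}\}\le\max\{t_n,D^{(k)}_{n-c}\}\le J^{(k)}_n$, the final inequality being \eqref{eq:key} for case $k$ (for $n\le c$ read $D_{n-c}=0$ and use $J_n\ge t_n$). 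This closes the induction, and $D^{(k+1)}_m\le D^{(k)}_m$ for all $m$ follows at once from \eqref{eq:departure}.

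For the Kiefer--Wolfowitz statement it suffices to handle the comparison between the case that switches to FCFS exactly at the reference time $\tau$ (take $\tau=T$ when comparing cases~2 and~3, and $\tau=T'$ when comparing cases~3 and~4; call this case $A$) and the case already running FCFS by time $\tau$ (call it $B$). For $t\ge\tau$ both systems are FCFS queues fed by the same post-$\tau$ arrivals; furthermore, since at any instant the number of customers already in service plus the number waiting equals the number that have arrived, the $p$ customers present by time $\tau$ receive the first $p$ initiations in either system (post-$\tau$ arrivals are processed later under FCFS), so the $\ell^{\text{th}}$ arrival after $\tau$ is given service duration $S_{p+\ell}$ in \emph{both} systems. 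Consequently, by the monotonicity of the Kiefer--Wolfowitz recursion in the second half of Theorem~\ref{thm:path-wise-domination1}, everything reduces to coordinate-by-coordinate domination of the workload vectors at the single time $\tau$: $\mathbf V^{A}(\tau)\ge\mathbf V^{B}(\tau)$, reading the $A$-vector just after its re-organisation at $\tau$.

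This last inequality is the crux. It is immediate that $A$ has the larger \emph{total} residual workload at $\tau$, since by the first part $A$ has completed no more work than $B$ while the total work that has entered by $\tau$ equals $\sum_{m\le p}S_m$ for both; but coordinate-wise domination is genuinely stronger and does not follow from domination of totals. The device is to freeze all arrivals at time $\tau$: then for a FCFS system the number of coordinates of the workload vector at $\tau$ exceeding a level $a\ge0$ is exactly the number of servers still busy at time $\tau+a$, which --- the discipline being work-conserving --- equals $\min\{c,\,k(\tau+a)\}$ with $k(s)$ the number of customers present at time $s$. Applying the already-established domination of departure times to this truncated arrival sequence, $A$ has departed no more customers than $B$ by any time $s\ge\tau$, hence has at least as many present, hence at least as many busy servers; so the sorted workload vector of $A$ has at least as many coordinates above each level $a\ge0$ as that of $B$, i.e.\ $\mathbf V^{A}(\tau)\ge\mathbf V^{B}(\tau)$ coordinate-by-coordinate. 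Combined with the reduction above, this establishes the Kiefer--Wolfowitz domination for all $t\ge\tau$, completing the proof.
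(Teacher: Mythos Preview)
Your inductive argument for the ordering of initiation and departure times follows essentially the same line as the paper's, though you treat the comparison $k$ versus $k+1$ directly rather than first reducing to case~3 versus case~4. One small slip: the equality $J^{(k+1)}_n=\max\{t_{q(n)},D^{(k+1)}_{n-c}\}$ that you claim for $J^{(k+1)}_n\ge\tau_{k+1}$ can fail at the boundary $J^{(k+1)}_n=\tau_{k+1}$. If under RA just before the switch some server is idle while another has waiting customers, then at the switch those customers initiate at time $\tau_{k+1}$, which can strictly exceed both $t_{q(n)}$ and $D^{(k+1)}_{n-c}$. The repair is immediate (in that case $J^{(k)}_n\ge\tau_{k+1}$ because the two systems coincide up to $\tau_{k+1}$), and the paper avoids the issue by reducing first to the comparison against pure FCFS, where the formula $J_{m+c}=\max\{t_{m+c},D_m\}$ holds exactly throughout.

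Your argument for the Kiefer--Wolfowitz domination takes a genuinely different route. The paper constructs two auxiliary FCFS processes (called $3'$ and $4'$) started at $T'$, fed by arrival times $\max\{t_i,T'\}$ and service durations equal to the residuals at $T'$ of cases~1 and~4 respectively; it then argues that these auxiliary processes have the same workload vectors at $T'$ as cases~3 and~4, and compares the auxiliary vectors via Theorem~\ref{thm:path-wise-domination1} using the residual inequality $R^{(3)}_i\ge R^{(4)}_i$ (which comes from $J^{(1)}_i\ge J^{(4)}_i$). Your level-set device --- freeze all post-$\tau$ arrivals and read off the number of coordinates of $\mathbf V(\tau)$ above level $a$ as the number of busy servers at time $\tau+a$, hence as $\min\{c,k(\tau+a)\}$, and then invoke the already-proved departure ordering for the truncated arrival sequence --- is more direct and bypasses the auxiliary constructions entirely. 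Both approaches establish the inequality at the switch time and then propagate it forward via monotonicity of the Kiefer--Wolfowitz recursion (and both rely on your observation that the $\ell^{\text{th}}$ post-$\tau$ arrival receives service duration $S_{p+\ell}$ in either system); yours reaches the conclusion with less bookkeeping, while the paper's residual-workload comparison makes the mechanism of domination at the switch time more explicit.
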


\begin{proof}
First observe that the desired relationships between case \(1\) and case \(2\), and between case \(2\) and case \(3\), follow immediately
once we have established the desired relationship between case \(3\) and case \(4\). 
For the two compared systems evolve in exactly the same way up to time \(T\), respectively \(T'\), and so we may simply argue in terms of the processes started at time \(T\), respectively \(T'\),
for example in case \(1\) adjusting the sequence of arrival times by \(t_n\mapsto \min\{t_n-T,0\}\) and replacing service durations of services initiated before \(T\) by the residual service duration at \(T\).

The argument therefore depends on the comparison between case \(3\) and the FCFS case \(4\). Letting quantities with tildes refer to case \(3\), and using the notation of the proof of Theorem \ref{thm:path-wise-domination1}, we find from the arguments for \eqref{eq:departure} that
\begin{align*}
 \wD_m \quad&=\quad \min{}^{(m)}\{\wJ_1+S_1, \wJ_2+S_2, \wJ_3+S_3,\dots, \wJ_{m+c-1}+S_{m+c-1}\} \,,\\
 D_m   \quad&=\quad \min{}^{(m)}\{J_1+S_1, J_2+S_2, J_3+S_3,\dots, J_{m+c-1}+S_{m+c-1}\}\,,
\end{align*}
(noting that here the service durations agree when considered in order of initiation) 
and from the arguments for \eqref{eq:key}, and the fact that FCFS holds for case \(4\), that
\begin{align*}
 \wJ_{m+c} \quad&\geq\quad \max\{t_{m+c}, \wD_{m}\}\,,\\
 J_{m+c} \quad&=\quad \max\{t_{m+c}, D_{m}\}\,.
\end{align*}

Consider the inductive hypothesis that \(\wD_u\geq D_u\) for \(u=1,\ldots,m-1\) and \(\wJ_v\geq J_v\) for \(v=1,\ldots,m+c-1\).
This holds for \(m=1\), since under FCFS the first \(c\) people are served at their arrival times while 
service cannot occur earlier under any other allocation policy, so \(\wJ_u\geq J_u=t_u\) and \(\wD_u=\wJ_u+S_u\geq t_u+S_u\) for \(u=1, \ldots, c\). 
Suppose the inductive hypothesis holds for \(m=n\). Then we can apply the above monotonic formulae
and deduce that the inductive hypothesis holds for case \(m=n+1\) too. Thus the theorem follows by mathematical induction.

We now prove the claimed Kiefer-Wolfowitz domination between cases 3 and 4 for all times $t\geq T'$. (The proof that case 2 dominates case 3 for $t\geq T$ follows similarly.) To do this we construct two new  $\cdot/\cdot/c\;[FCFS]$ processes (called cases \(3'\) and \(4'\)) as follows: both systems have arrival times $t_i' = \max\{t_i,T'\}$, and case \(3'\) (respectively \(4'\)) has service durations given by the residual service durations in case 1 (respectively case 4) at time $T'$. That is, the system in case \(3'\) has service durations $R^{(3)}_1, R^{(3)}_2,\dots$ where
\[ R^{(3)}_i = \max\{J^{(1)}_i+S_i,T'\} - \max\{J^{(1)}_i,T'\} \,, \]
case \(4'\) has service durations $R^{(4)}_1, R^{(4)}_2,\dots$ where
\[ R^{(4)}_i = \max\{J^{(4)}_i+S_i,T'\} - \max\{J^{(4)}_i,T'\} \,, \]
and where $J^{(1)}_i$ and $J^{(4)}_i$ are the times of $i^\text{th}$ initiation of service in cases 1 and 4 respectively (see Figure~\ref{fig:process3'}).

\begin{figure}[t]
\centering
\includegraphics[width=\textwidth]{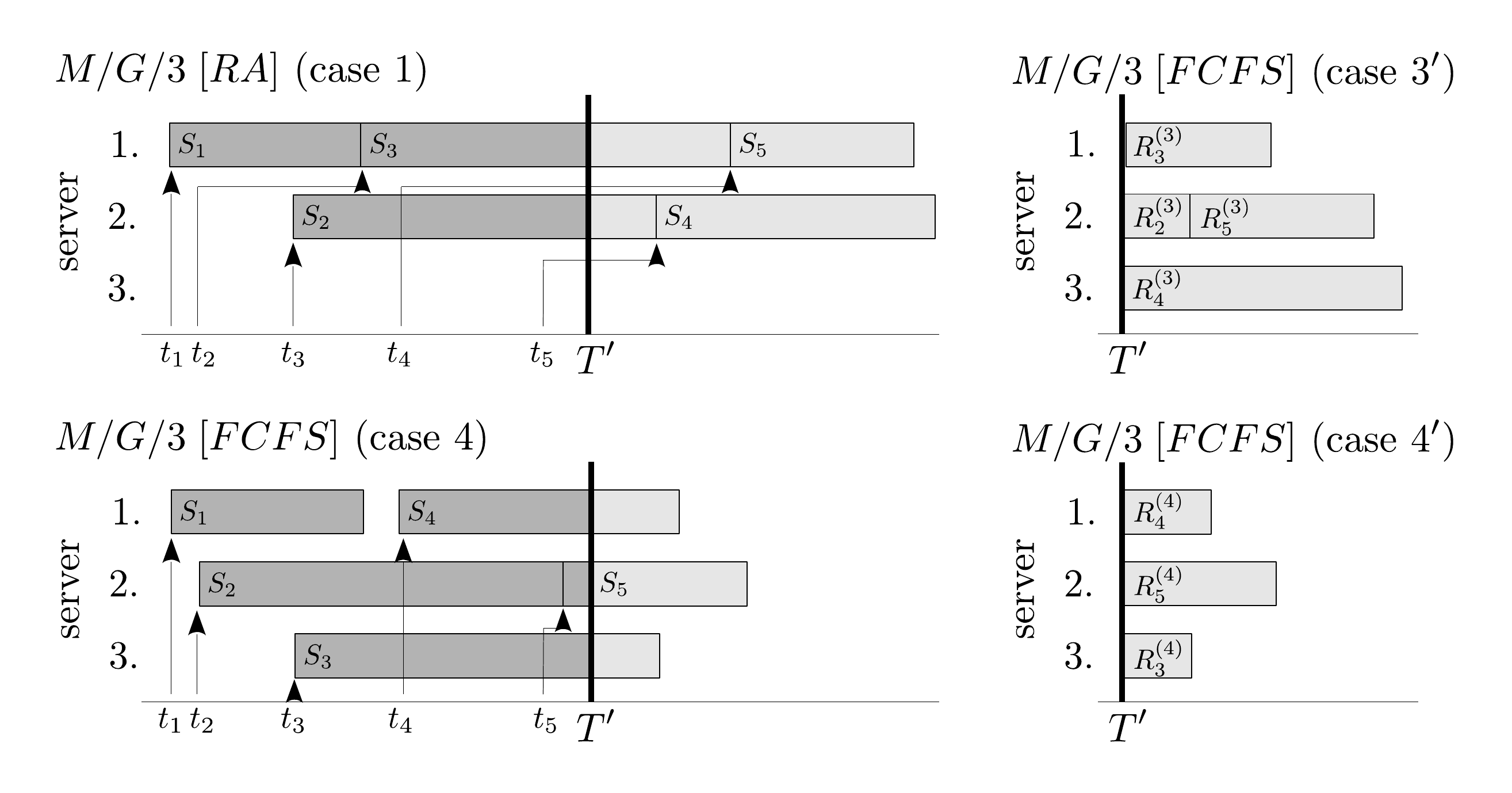}
\caption{Illustration (with three servers) of how cases $3'$ and $4'$ are instantiated at time $T'$ from the evolution over $[0,T')$ of cases 1 and 4 respectively. Top-left diagram shows arrivals at times $t_1,\dots,t_5$ and a possible allocation to servers 1--3 in the $M/G/3\,[RA]$ process. Lengths of blocks represent size of workloads $S_1,\dots,S_5$; work completed by time $T'$ is shaded dark grey. Top-right diagram shows how the residual workloads are allocated to servers at time $T'$ in case $3'$ (under FCFS). The bottom row shows how the same set of arrivals and service durations are handled by the FCFS process in case 4, and how the corresponding residual workloads are used to instantiate case $4'$ at $T'$.}\label{fig:process3'}
\end{figure}

Now consider the Kiefer-Wolfowitz vectors for these processes. We claim that (with subscripts corresponding in an obvious way to the cases being considered)
\begin{enumerate}
\item[(i)] $\mathbf{V}_{4'}(T') = \mathbf{V}_{4}(T')$
\item[(ii)] $\mathbf{V}_{3'}(T') = \mathbf{V}_{3}(T')$
\item[(iii)] $\mathbf{V}_{4'}(T') \tleq \mathbf{V}_{3'}(T')$, where $\tleq$ denotes coordinate-wise domination.
\end{enumerate}
The proof that $\mathbf{V}_{4}(t) \tleq \mathbf{V}_{3}(t)$ for all $t\geq T'$ follows from these three claims: it is immediate that the required domination holds at time $T'$; domination at all subsequent times follows by applying the final part of Theorem~\ref{thm:path-wise-domination1}, since cases 3 and 4 both operate a FCFS policy over $[T',\infty)$, with the same arrival times and associated service durations over this period. 

It therefore remains to prove claims (i)-(iii) above. For (i), note that any service duration $S_i$ which has completed service by time $T'$ under case 4 will correspond to an arrival time $t_i'=T'$ and residual service duration $R^{(4)}_i=0$ for case $4'$: such service durations therefore make no contribution to workload vectors $\mathbf{V}_{4'}(T')$ and $ \mathbf{V}_{4}(T')$. Next consider any pairs $(T',R^{(4)}_i)$ for which $R^{(4)}_i<S_i$ (i.e. customers who arrive by time $T'$ and who have had some, but not all, of their workload served by case 4 before this time). There can clearly be no more than $c$ such customers, and by construction they make the same positive contribution to both workload vectors $\mathbf{V}_{4'}(T')$ and $ \mathbf{V}_{4}(T')$ (subject to appropriate labelling of servers for case $4'$). 
The only other customers who contribute to  $\mathbf{V}_{4}(T')$ (and $\mathbf{V}_{4'}(T')$) are those who arrive before time $T'$ but who are yet to start service by this time (i.e. for which $J^{(4)}_i>T'$). These customers correspond to inputs of the form $(T',S_i)$ for case $4'$. Consider the first such arrival: in case 4 this customer is placed  in queue at its arrival time $t_i<T'$, being allocated to the server with the least residual workload at time $t_i$. But since all $c$ servers in case 4 must be busy  over the entire period $[t_i,T']$ (for if not, the customer arriving at time $t_i$ would necessarily have commenced service by $T'$), this server still has the least residual workload at time $T'$. It follows that this customer will be allocated to the same server in case $4'$ at $T'$. Arguing inductively along these lines, it is clear that all customers arriving over $[t_i,T']$ are allocated to identical servers
in cases 4 and $4'$, implying that $\mathbf{V}_{4'}(T') = \mathbf{V}_{4}(T')$, as required.

For (ii), note that the workload vector $\mathbf{V}_{3'}(T')$ is instantiated using the residual workloads at $T'$ from case 1 and then applying a FCFS policy, whereas the vector $\mathbf{V}_{3}(T')$ uses the residual workloads at $T'$ from case 3, again under FCFS (due to the change of service discipline in case 3 at time $T'$, as described in the statement of the theorem). But since the systems in cases 1 and 3 are identical over the period $[0,T')$, it is clear that $\mathbf{V}_{3'}(T') = \mathbf{V}_{3}(T')$.

Finally, claim (iii) follows from Theorem~\ref{thm:path-wise-domination1} applied to the two FCFS systems in cases $3'$ and $4'$, which use the same sequence of arrival times $(t_1',t_2',\dots)$ and possibly different sequences of service durations. But since $J^{(1)}_i\geq J^{(4)}_i$ for all $i$, it follows that $R^{(3)}_i \geq R^{(4)}_i$ (see Figure~\ref{fig:process3'} for an illustration), and so service durations for case $3'$ are at least as big as those for case $4'$, which provides the required monotonicity.
\end{proof}

We close this section with a standard lemma which assures us that actual numbers of customers in the systems
also obey the comparisons indicated in Theorems \ref{thm:path-wise-domination1} and \ref{thm:path-wise-domination2}, so long as the arrival processes agree. 
(Note that the same is \emph{not} true of total residual work-load.)
\begin{lem}\label{lem:domination3}
 Consider two queueing systems, such that arrivals happen at the same time for each system,
initiations of service happen earlier in the first than in the second (\(\wJ_m\geq\ J_m\) for all \(m\)),
and service durations are shorter in the first than in the second when indexed by order of initiation of service (\(\wS_m\geq S_m\) for all \(m\)).
Then numbers in the second system \(\wX_t\) exceed numbers in the first system \(X_t\) at any specific time \(t\).
\end{lem}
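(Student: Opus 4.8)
The plan is to reduce the statement to elementary bookkeeping of arrival and departure epochs, exploiting the fact that the number of customers present depends on the sample path only through those epochs and not on which server handles which customer. First I would note that, for either system and any fixed $t\ge 0$,
\[
X_t \;=\; A_t - D_t\,,
\]
where $A_t$ is the number of customers that have arrived by time $t$ and $D_t$ is the number that have departed by time $t$, a fixed convention being adopted for events occurring exactly at time $t$ (the choice is immaterial). Since the two systems share the same arrival process, the counts $A_t$ coincide, so it suffices to show $\wD_t \le D_t$, i.e. that the second system has suffered no more departures by time $t$ than the first.

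The key step would be to express the departure counts through the service-initiation indexing that appears in the hypotheses. The customer whose service is the $m^\text{th}$ to be initiated begins service at time $J_m$ (respectively $\wJ_m$) and departs precisely at time $J_m+S_m$ (respectively $\wJ_m+\wS_m$); since each service duration is finite and corresponds to a unique customer, these epochs exhaust all departures. Hence
\[
D_t \;=\; \#\{m\ge 1: J_m+S_m\le t\}\,,\qquad \wD_t \;=\; \#\{m\ge 1: \wJ_m+\wS_m\le t\}\,,
\]
both counts being finite. By hypothesis $\wJ_m\ge J_m$ and $\wS_m\ge S_m$ for every $m$, so $\wJ_m+\wS_m\ge J_m+S_m$ for every $m$, giving $\{m:\wJ_m+\wS_m\le t\}\subseteq\{m:J_m+S_m\le t\}$ and therefore $\wD_t\le D_t$. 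Subtracting from the common $A_t$ yields $\wX_t\ge X_t$. (Equivalently, one may pass to the ordered departure epochs $D_1\le D_2\le\cdots$ and $\wD_1\le\wD_2\le\cdots$ used in Theorems~\ref{thm:path-wise-domination1} and~\ref{thm:path-wise-domination2}, noting that termwise domination of a sequence is inherited by its order statistics, so $\wD_m\ge D_m$ for all $m$, whence fewer of the $\wD_m$ lie below $t$.)

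I do not expect a genuine obstacle here, since the argument is purely combinatorial. The only points deserving care are the identification of the multiset of departure epochs with $\{J_m+S_m:m\ge 1\}$ (legitimate because every initiated service completes and is attached to exactly one customer) together with a consistent endpoint convention, and the conceptual observation that the customer-count process is a function of arrival and departure epochs alone, with no dependence on server allocation or on residual service times. It is precisely this feature that makes the comparison survive, in contrast to the total residual workload, for which — as flagged in the preamble to the lemma — the analogous comparison does \emph{not} hold.
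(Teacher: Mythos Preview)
Your proposal is correct and essentially coincides with the paper's proof: both write $X_t$ as arrivals minus departures, use the common arrival process, and show that departures in the second system are delayed because $\wJ_m+\wS_m\ge J_m+S_m$. The only cosmetic difference is that the paper first passes to the ordered departure epochs via the order-statistic representation \eqref{eq:departure0} (your parenthetical alternative) before counting, whereas your primary argument counts the unordered set $\{m:J_m+S_m\le t\}$ directly; the content is identical.
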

\begin{proof}
 Let \(X_t\), \(\wX_t\) be numbers in the system at time \(t\). Since \(\wJ_m+\wS_m \geq J_m+S_m\) for any \(m\), the representation \eqref{eq:departure0}
shows that departures happen later in the second system:
\[
 \wD_m \quad\geq\quad D_m \qquad \text{ for any }m.
\]
At a given time \(t\) we know that the same number \(\#\{m:t_m\leq t\}\) of customers have entered each of the systems. However the above inequality 
for departures shows that fewer have left the second system than the first. Accordingly
\[
 \wX_t\quad=\quad \#\{m:t_m\leq t\} - \#\{m:\wD_m\leq t\}\quad\geq\quad
 X_t\quad=\quad \#\{m:t_m\leq t\} - \#\{m:D_m\leq t\}\,.
\]
\end{proof}


\section[Simple dominated CFTP]{Simple dominated coupling from the past for $M/G/c$}\label{sec:domCFTP}
We seek a coupling from the past algorithm for an \(M/G/c\;[FCFS]\) queueing system.
The key step is to find a dominating process which is reversible. Sigman \cite{Sigman-2011} showed how to do this
if the system is \emph{super-stable} (arrival rate less than service rate of single server, i.e. $\rho<1$): observe
that the system is dominated by an \(M/G/1\;[FCFS]\) queue, notice that the workload process of the \(M/G/1\;[FCFS]\) queue
is the same as that of the same queue under processor sharing (\(M/G/1\;[PS]\)), exploit the dynamic time reversibility
of the \(M/G/1\;[PS]\) workload process (exchange residual workload for work so far completed on customers in service)
to simulate it backwards in time until empty, then 
use the observed departure times and associated service durations to evolve the Kiefer-Wolfowitz vector for the $M/G/c\;[FCFS]$ queue forwards until time zero.

This section shows how to improve on this by lifting the super-stability requirement, leaving only the minimal stability
requirement (arrival rate less than total service rates of all servers, i.e. $\rho<c$).
The idea is as follows: the results of the previous section show that the system is dominated by an \(M/G/c\;[RA]=[M_{\lambda/c}/G/1\;FCFS]^c\) queueing system with
random assignment allocation policy. Sigman \cite{Sigman-2011} noted that na{\"\i}ve pathwise domination fails; however we can and will exploit the
path-wise domination which holds when service durations are assigned in order of initiation of service. Again we can extend the 
dominating process backwards in time using the processor sharing representation. The simplest way to construct a dominated coupling from the past
is then to extend backwards in time till the dominating \(M/G/c\;[RA]\) system becomes completely empty,
because this allows us to identify service durations with initiations of service in a way which is consistent with further extensions backwards in time.
In effect we are exploiting the ``regenerative atom'' idea noted in \cite{KendallMoeller-2000}. The resulting sequences of arrival times
and service durations can then be used to construct a realization of an \(M/G/c\;[FCFS]\) queue that is subordinate to the dominating process.
Since this can be extended further back in time, using further emptying times of the dominating process, 
we have produced the tail end of a ``simulation from time minus infinity'' which must therefore be in equilibrium at time zero (for a more mathematical account of this idea, see 
 \cite{Kendall-2005a}).

So the steps of the algorithm are as follows:

\begin{algorithm}\label{alg:1}
The algorithm description involves some random processes and associated random quantities which are run backwards in time: 
such quantities are crowned with a hat (for example, \(\hat{Y}\) below). We summarize the algorithm in 4 steps.
\begin{enumerate}
 \item\label{item:PS}
 \emph{Consider a $[M/G/1\;PS]^c$ process $\hat{Y}$, run backwards
in time in statistical equilibrium. Make a draw from \(\hat{Y}(0)\), the state of the process at time zero.}
 \item\label{item:reverse-time}
 \emph{Simulate the \(c\) components of the reversed-time process $(\hat{Y}(\hat{t}):\hat{t}\geq0)$ over the range \([0,\hat{\tau}]\), where \(\hat{\tau}\)
 is the smallest reversed time such that all components are empty at \(\hat{\tau}\).}
 \item\label{item:RA}
 \emph{Use $(\hat{Y}(\hat{t}):\hat{t}\in[0,\hat{\tau}])$ to construct its (dynamic) time reversal, and thus to build 
$(Y(t):\tau\leq t\leq 0)$, an $M/G/c\;[RA]=[M/G/1\; FCFS]^c$ process (here we set \(\tau=-\hat{\tau}\)).}
 \item\label{item:FCFS}
 \emph{Use $Y$ to evolve $X$, an $M/G/c\; [FCFS]$ process, over $[\tau,0]=[-\hat{\tau},0]$, started in the empty state.}
\end{enumerate}
Because of the comparison theorems \ref{thm:path-wise-domination1} and \ref{thm:path-wise-domination2}, and Lemma \ref{lem:domination3},
we may further extend \(\hat{Y}\) forwards in reversed time, and thus \(Y\) backwards in time,
and use this construction to build further variants of \(X\) started in the empty state from any time earlier than \(-\tau\).
Suitably extended back in time, \(Y\) dominates all these versions; moreover agreement of any two variants \(X^{(1)}\) and \(X^{(2)}\)
is enforced at the point when \(Y\) visits the empty state subsequent to both of their starting times. The arguments discussed in
\cite{Kendall-2005a} then show that the common value \(X(0)\) of all these variants must be a draw from the statistical equilibrium of
the \(M/G/c\;[FCFS]\) queue under consideration.
\end{algorithm}
We now discuss in turn the details of each of these steps.

\subsubsection*{Step \ref{item:PS}: generating a draw from the processor sharing queue system in equilibrium.}
Thanks to the Pollaczek-Khintchine formula for an \(M/G/1\) queue, we know that the equilibrium distribution for the residual workload of $\hat{Y}_j$ at time \(0\)
(where $j\in\{1,\dots,c\}$ denotes the $j^{th}$ server) is distributed as 
\[ 
\sum_{i=1}^{\hat{Q}_j(0)} \hat{S}_{j,i}^e(0) \,.
\]
Here $\hat{S}_{j,i}^e(0)$ are independent and identically distributed draws from the distribution of service durations in equilibrium,
with distribution function
\[ 
G_e(x) \quad=\quad \mu \int_0^x \bar G(y) \operatorname{d} y \qquad\text{ for } x\geq0 
\]
(for \(\bar G(y)=1-G(y)\) the complementary distribution function of a service duration),
while
$\hat{Q}_j(0)$ is an independent random variable with geometric distribution given by 
\[
\Prob{\hat{Q}_j(0)=n}\quad=\quad(\rho/c)^n(1-\rho/c) \qquad \text{ for }n\geq 0\,.
\]
However, we need to know the \emph{total} (not only residual) workload brought by each of the customers currently in service. 
Arguing as in \cite{Sigman-2011}, or using a dynamic reversibility argument, 
we do this by simulating from the stationary \emph{spread} distribution for each of the $\hat{Q}_j(0)$ customers being served by server $j$ at time 0, giving draws $H_{j,1},H_{j,2},\dots,H_{j,\hat{Q}_j(0)}$: these represent the total workload brought by each customer. 
Here the stationary spread distribution of service durations is the length-biased variant of \(G\), 
with complementary distribution function given by
\[ 
\bar G_s(x) \quad=\quad \mu x \bar G(x) + \bar G_e(x) \qquad\text{ for } x\geq0 \,.
\]
(Our assumption that $\Expect{S^2}<\infty$ guarantees that the spread distribution has finite mean.) We then draw independent $\textrm{Uniform}[0,1]$ random variables $U_{j,i}$ and set $\hat{S}_{j,i}^e(0) = U_{j,i}H_{j,i}$ to 
represent the residual workloads at time \(0\).

Finally, since all of the servers in $\hat{Y}$ work independently of each other, 
$c$ independent draws from this distribution deliver an equilibrium draw from $\hat{Y}$. Set 
\[
\hat{Y}_j(0) \quad=\quad R(S^e_{j,1}(0),\dots,S^e_{j,\hat{Q}_j(0)}(0)) 
\]
to be this draw from equilibrium, viewed as a list of workloads $S^e_{j,i}(0)$ listed in increasing order (\(R\) being the re-ordering operator mentioned in Section \ref{sec:dynamic}).

\subsubsection*{Step \ref{item:reverse-time}: evolving the processor sharing queue system in reverse time till it empties.}
We record the \([M/G/1\;PS]^c\) queueing system as follows: at (reversed) time \(\hat{t}\), the system is defined by
\begin{align*}
 \hat{Q}_j(\hat{t})\quad&=\quad \text{the number of customers for server \(j\) at time } \hat t\,,\\
 S^e_{j,i}(\hat{t})\quad&=\quad \text{the residual workload of customer \(i\)  for server \(j\) at time } \hat{t}\,,\\
 \hat{Y}_j(\hat{t})\quad&=\quad R(S^e_{j,1}(\hat{t}),\dots,S^e_{j,\hat{Q}_j(\hat{t})}(\hat{t}))\,,\\
 \hat{Y}(\hat{t})\quad&=\quad (\hat{Y}_1(\hat{t}), \hat{Y}_2(\hat{t}), \ldots,\hat{Y}_c(\hat{t}))\,. 
\end{align*}
It is convenient to write \(|\hat{Y}_j(\hat{t})|=\hat{Q}_j(\hat{t})\), 
and \(|\hat{Y}(\hat{t})|=|\hat{Y}_1(\hat{t})|+|\hat{Y}_2(\hat{t})|+\ldots+|\hat{Y}_c(\hat{t})|\).

If $|\hat{Y}(0)| = 0$ (so there is no residual workload left in the system at all) then set $\hat{\tau}=0$ and stop simulating.

Otherwise, use event-based simulation.
Calculate the next event time after \(\hat{t}\) as follows: 
For each \(j\in\{1,2,\ldots,c\}\), all of the $\hat{Q}_j$ customers of server \(j\) are served simultaneously by server $j$ at rate $1/|\hat{Y}_j(t)|$ 
until either one of the customers of one of the servers has been completely served (and then leaves the system)
or a new customer arrives (at rate $\lambda$) to be served by one or another of the servers. Reset $\hat{t}$ accordingly.
\begin{itemize}
\item If the event is an \emph{arrival}, then generate a new service duration $S$ for the customer (using distribution $G$) and choose a server $j\in\{1,\dots,c\}$ 
to which the customer is allocated. The customer is placed in service, so increment $\hat{Q}_j$ by \(+1\) (so that the per-customer service rate of server $j$
drops accordingly).
\item If the event is a \emph{departure}, then record the \emph{departure time} and the \emph{full} service duration of the departing customer; 
increment $\hat{Q}_j$ by \(-1\) (so that the per-customer service rate of server $j$
increases accordingly). If $|\hat{Y}(\hat{t})| = 0$ then set $\hat{\tau}=\hat{t}$ and stop simulating.
\end{itemize}
If $\hat{\tau}>0$, then record the departure times of customers as $0\leq \hat{t}_1\leq \hat{t}_2 \leq \dots \leq \hat{t}_k=\hat{\tau}$, and 
record the associated (full) service durations as $S_1,\dots,S_k$.

\subsubsection*{Step \ref{item:RA}: dynamic time-reversal and construction of the $M/G/c\;[RA]=[M/G/1\; FCFS]^c$ 
dominating process.}
Let the $M/G/c\;[RA]=[M/G/1\; FCFS]^c$ system $Y$ start from the empty state at time \(\tau=-\hat{\tau}\) and run forward in time. We let \(|Y(t)|\) denote the total number of customers in \(Y\) at time \(t\).
Arrivals occur at times $\tau = -\hat{t}_k \leq -\hat{t}_{k-1} \leq\dots \leq -\hat{t}_1$. 
The customer arriving at time $-\hat{t}_i$ has associated service duration $S_i$ (obtained from records kept as specified in Step \ref{item:reverse-time} above),
and is allocated to the same server that completed service $S_i$ in $\hat{Y}$. 
Reorder the set of service durations according to the corresponding \emph{initiation of service durations} in the forwards queueing system $Y$. 
Denote this ordered list by $\mathcal S'= (S'_1,\dots,S'_k)$: if $J^Y_i$ is the time of initiation of service $S'_i$ in $Y$, 
then $\tau = J^Y_1 \leq  J^Y_2 \leq \dots \leq J^Y_k$.

Note that it is possible for $J^Y_i$ to be positive, in the case where $Y$ has customers in the queue at the (terminal!) time \(0\) 
who have yet to commence service. 
In the event that $J^Y_k>0$, we extend the simulation of $Y$ further into the future by drawing extra (independent) arrival times over the period $(0,J^Y_k]$, along with associated service durations. 
This results in a set of additional service durations with associated times of initiation of service: 
these extra services are then added to the list $\mathcal S'$ in order of these times of initiation of service. 
(Note that this implies a potential change in index for service durations $S'_i$ for which $J^Y_i>0$.)

This gives us a method of constructing a stationary version of $Y$ started arbitrarily far back into the past and run until the time when all customers in the system at time zero have commenced service: 
for example, our simulation of $\hat{Y}$ can be extended to the second time $\hat \tau'>\hat\tau$ of emptying, 
and then these additional departure times and service durations used to feed $Y$ over the corresponding period of forward time $[\tau'.\tau)$. 
Since the workload of the $M/G/1$ queue is invariant under changes of work-conserving discipline, 
it follows that if $Y$ starts from empty at $\tau'$ then it will again be empty just before the arrival at time $\tau$.

\subsubsection*{Step \ref{item:FCFS}: construction of the target process $M/G/c\; [FCFS]$ process.}
Start the $M/G/c\; [FCFS]$ queue $X$ from the empty state at time \(\tau\), 
and let it evolve (using \eqref{eqn:KW-recursion}) by generating arrivals at times 
$\tau = -\hat{t}_k \leq -\hat{t}_{k-1} \leq\dots \leq -\hat{t}_1$ (i.e. the same arrival times as used for $Y$), 
but with service duration $S'_i$ now allocated to the arrival $-\hat{t}_i$. 
Since customers are served by $X$ in order of arrival, this means that service durations are once again allocated by time of initiation of service, 
i.e. $J^X_1 \leq J^X_2 \leq \dots$. 
The domination arguments of Section~\ref{sec:domination} permit us to argue that $J^X_i \leq J^Y_i$ for $i=1,\dots,k$, 
and so $X$ satisfies $|X(t)| \leq |Y(t)|$ for all $t\in[\tau,0]$, where \(|X(t)|\) denotes the total number of customers in \(X\) at time \(t\). 
(Note, however, that it is certainly \emph{not} the case that the residual workload in $X(t)$ is necessarily dominated by that in $Y(t)$.) 
Return $X(0)$ as a draw from equilibrium of an $M/G/c\; [FCFS]$ process.


\section[Sandwiching]{Sandwiching for Dominated CFTP algorithm}\label{sec:sandwich}
The algorithm described in Section \ref{sec:domCFTP} is inefficient, because it uses the regenerative atom which is the empty system state.
For typical applications of \(M_\lambda/G/c\) queueing systems, we would expect
\(1\ll \rho <c\),
so that the system would frequently visit states where no more than \(c\) people were in the system, but would only rarely visit the
empty state.

A more efficient dominated coupling from the past algorithm exploits the domination results (Theorems \ref{thm:path-wise-domination1}, \ref{thm:path-wise-domination2} and Lemma \ref{lem:domination3}) to establish \emph{sandwiching}. The idea is to stop the backward-in-time simulation of 
the $[M/G/1\;PS]^c$ process $\hat{Y}$ at some  time \(\hat{T}\) well short of the time required to achieve empty state, but then to construct a lower envelope
\(M/G/c\; [FCFS]\) process \(L\) (started at the empty state at time \(-\hat{T}\)) and an upper envelope \(M/G/c\; [FCFS]\) process \(U\) (started using the state of the forwards dominating $M/G/c\;[RA]=[M/G/1\; FCFS]^c$ process \(Y\)
at time \(-\hat{T}\)), and to evolve these using the arrival times and service durations derived from \(Y\) in such a way that 
(a) at any given time, the number of people in \(L\) lies below the number in \(U\) which in turn lies below the number of people in \(Y\),
(b) similar envelope processes begun at earlier times sandwich themselves between \(L\) and \(U\) 
(the so-called ``sandwiching property''), in the sense of coordinate-wise domination of Kiefer-Wolfowitz workload vectors.
It follows from the theory of dominated coupling from the past  \cite{KendallMoeller-2000,Kendall-2005a} that if we then successively decrease \(-\hat{T}\) till eventually \(L(0)=U(0)\) then the common state of \(L(0)=U(0)\) will be a draw from the equilibrium (this depends crucially on the sandwiching property mentioned above, which must not be neglected in implementation).
The delicate issue in all this is exactly the requirement to maintain sandwiching. This requires us to match service durations to times of initiation of service, not just with respect to individual pairs of envelope processes, but also as between a couple of pairs begun at different times.
The trick is to extend the simulation of \(\hat{Y}\) beyond \(\hat{T}\) so that matching may be carried out in a stable way.

\begin{algorithm}\label{alg:2}
\begin{enumerate}
 \item\label{item:PS2}
 \emph{Consider a $[M/G/1\;PS]^c$ process $\hat{Y}$, run backwards
in time in statistical equilibrium. Make a draw from \(\hat{Y}(0)\), the state of the process at time zero.}
\item\label{item:reverse-time2} \emph{Fix a suitable positive \(\hat T = -T\). 
Evolve the queue for server $j$ of $\hat{Y}$ (independently of all other servers) until the first time $\hat\tau_j\geq \hat T$ that \emph{this server} is empty, for $j=1,\dots,c$.}
\item\label{item:RA2} \emph{Construct $Y_j$, an $M/G/1\; [FCFS]$ process over the corresponding reversed time interval $[-\hat\tau_j,0]$, for $j=1,\dots,c$.}
\item\label{item:FCFS2} \emph{Produce lists of service durations and arrival times, $\mathcal L^*_{T}$ and $\mathcal L_T$.}
\item\label{item:upper2} \emph{Construct an upper sandwiching process, $U_{[T,0]}$ over $[T,0]$.}
\item\label{item:lower2} \emph{Construct a lower sandwiching process, $L_{[T,0]}$ over $[T,0]$.}
\item\label{item:coalescence2} \emph{Check for coalescence.}
\end{enumerate}
\end{algorithm}
We now discuss in turn the details of each of these steps.

\subsubsection*{Step \ref{item:PS2}: Produce a sample from the stationary distribution of the $[M/G/1\; PS]^c$ process $\hat{Y}$.}
This is performed exactly as in Algorithm \ref{alg:1}.

\subsubsection*{Step \ref{item:reverse-time2}: Evolve the queue for \emph{each server} of $\hat{Y}$ independently until empty.}

Record departure times and associated (full) service durations for each server; simulate the queue served by server $j$ (as in Step~\ref{item:reverse-time} of Algorithm \ref{alg:1}) until time 
\[
\hat{\tau}_j = \inf\{\hat{t}\geq \hat{T}:|\hat{Y}_j(\hat{t})| = 0\}, \quad j=1,\dots,c. 
\]

\subsubsection*{Step \ref{item:RA2}: Construct $Y_j$, an $M/G/1\;[FCFS]$ process over the corresponding reversed time interval, for $j=1,\dots,c$.}

For each server \(j\in\{1,2,\ldots,c\}\), we simulate $Y_j$ starting in the empty state at time \(\tau_j=-\hat \tau_j\), 
and we feed the simulation with arrival times and associated service durations corresponding to the recorded departures from $\hat{Y}_j$. If \(t^+>0\) is the \emph{positive} time by which all customers in \(Y\) at time zero have initiated service, 
then extend each \(Y_j\) simulation to cover the range \((\tau_j,t^+]\), as in the detail of Step \ref{item:RA} of Algorithm \ref{alg:1}. 
Note that, since the $M/G/1\;[FCFS]$ process $Y_j$ starts from empty at time $\tau_j$, the path of $Y_j$ over \([0,t^+]\) will remain unchanged if we decrease the value of $T<0$.
Furthermore, since $\tau_j \leq T$ for $j=1,\dots,c$, we have in fact established the path of $Y$, an $[M/G/1\; FCFS]^c$ process, over the interval $[T,0]$.

\subsubsection*{Step \ref{item:FCFS2}: Produce lists of service and arrival times, $\mathcal L^*_T$ and $\mathcal L_T$.}

Form the union of all arrival times observed in each $Y_j$ over the interval $[\tau_j,0]$, and order them as $t_1\leq t_2\leq \dots\leq t_n$. Similarly, form the union of all pairs $(J,S)$ of time $J$ of initiation of service and associated service duration $S$ from each $Y_j$ over the interval $[\tau_j,t^+]$, and order these in increasing order of $J$. Let 
$\mathcal L_T^* = \{ (t_k,J_k,S_k): t_k\leq T \}$ and $\mathcal L_T= \{ (t_k,J_k,S_k): t_k> T  \}$.
Finally, for each $(t_k,J_k,S_k)\in\mathcal L_T^*$, replace the arrival time $t_k$ by $T$, $J_k$ by $J_k\vee T$, and the service duration $S_k$ by its \emph{residual workload} at time $T$. That is, replace $(t_k,J_k,S_k)$ by \((T,J_k\vee T,R_k)\), where \(R_k=(J_k+S_k)\vee T-(J_k\vee T)\).

\subsubsection*{Step \ref{item:upper2}: Construct an upper sandwiching process, $U_{[T,0]}$ over $[T,0]$.}

We construct an $M/G/c\;[FCFS]$ process $U_{[T,0]}$ over $[T,0]$ by starting from empty at time $T$ and feeding it the arrival times and service durations read first from $\mathcal L_T^*$ and then from $\mathcal L_T$. The intention here is that $U$ can be seen to be a process which switches from the $M/G/1\;[RA]$ queue $Y$ to an $M/G/c\;[FCFS]$ queue at time $T$: Theorem~\ref{thm:path-wise-domination2} guarantees that $|U|$ (the number of customers in the upper process) will be dominated by $|Y|$.

\subsubsection*{Step \ref{item:lower2}: Construct a lower sandwiching process, $L_{[T,0]}$ over $[T,0]$.}

In a similar manner we construct an $M/G/c\;[FCFS]$ process $L_{[T,0]}$ over $[T,0]$ by starting from empty at time $T$ and feeding it the arrival times and service durations read once again from $\mathcal L_T^*$ and then from $\mathcal L_T$, but now \emph{with all the service durations in $\mathcal L_T^*$ set to zero}. Theorems~\ref{thm:path-wise-domination1} and \ref{thm:path-wise-domination2} ensure that $L_{[T,0]}\tleq U_{[T,0]}$ (here $\tleq$ denotes coordinate-wise domination of the Kiefer-Wolfowitz workload vectors).

\subsubsection*{Step \ref{item:coalescence2}: Check for coalescence.}

If the \emph{residual workload vectors} of $U_{[T,0]}(0)$ and $L_{[T,0]}(0)$ agree, return their common value as a draw from equilibrium of our target process $X$. If not, then replace $\hat T$ by $\hat T'>\hat T$, and return to Step \ref{item:reverse-time2}: extend the paths of $\hat{Y}_j$ until they have each emptied at some time $\hat{\tau_j}' \geq \hat T'$ etc., and continue as before. 

\medskip

The reader may be concerned that coalescence here occurs when the residual workload vectors first coincide, apparently without requiring equality of numbers of customers in system.
However, under FCFS, a disparity of numbers together with equality of residual workload vectors would require at least one of the two systems to have strictly more than \(c\) customers in system.
As already remarked after the proof of Theorem \ref{thm:path-wise-domination1}, 
coalescence for Algorithm \ref{alg:2} can only occur when both processes have idle servers, and in this case equality of residual workload vectors implies equality of the numbers of customers in system.

Since $L_{[T,0]}$ is a version of our target $M/G/c\;[FCFS]$ process started from empty, a standard dominated CFTP argument  \cite{KendallMoeller-2000,Kendall-2005a} shows that the above algorithm really does return a perfect draw from the correct equilibrium distribution, as long as the upper and lower processes really do satisfy the ``sandwiching property''. The following theorem establishes a rigorous validation of sandwiching for Algorithm \ref{alg:2}.
\begin{thm}\label{thm:sandwiching}
Let $L_{[T,0]}$ and $L_{[T',0]}$ (respectively $U_{[T,0]}$ and $U_{[T',0]}$) be lower (respectively upper) sandwiching processes, defined as above, started at times $T'<T<0$. Then for all times $t\in[T,0]$,
\[ L_{[T,0]}(t) \quad \tleq\quad L_{[T',0]}(t)  \quad\tleq\quad U_{[T',0]}(t) \quad\tleq\quad U_{[T,0]}(t) \,, \]
where $\tleq$ once again denotes coordinate-wise domination of the Kiefer-Wolfowitz workload vectors.
\end{thm}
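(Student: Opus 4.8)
The plan is to reduce the whole chain to comparisons that take place at or after time \(T\), exploiting the fact that over \([T,0]\) all four processes \(L_{[T,0]},L_{[T',0]},U_{[T,0]},U_{[T',0]}\) run under FCFS and are driven, over the sub-interval \((T,0]\), by the \emph{same} data, namely the arrivals at times \(t_k\in(T,0]\) carrying the service durations \(S_k\) decoded from one fixed realisation of the dominating process \(Y\). A preliminary point, needed to make ``the same'' meaningful here, is that extending the cut-off \(\hat T=-T\) backwards to \(\hat T'=-T'\) leaves \(Y\), and hence the decoded arrival times and service durations, unchanged over \([T,0]\); this is precisely the consistency noted in Step~\ref{item:RA2}. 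Granting this, over \([T,0]\) each of the four processes is produced by the Kiefer--Wolfowitz recursion \eqref{eqn:KW-recursion}--\eqref{eqn:KW-recursion-cts} fed with this common driving data and started at time \(T\) from its own workload vector there, so by the second part of Theorem~\ref{thm:path-wise-domination1} it depends monotonically on that initial workload vector.

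For the leftmost inequality it therefore suffices to check \(\mathbf 0=L_{[T,0]}(T)\tleq L_{[T',0]}(T)\); but \(L_{[T,0]}(T)=\mathbf 0\) because \(L_{[T,0]}\) has just been started empty and the zero-duration \(\mathcal L_T^*\)-arrivals contribute nothing, while \(\mathbf 0\) is dominated by any workload vector, so monotonicity in the initial workload vector propagates \(L_{[T,0]}(t)\tleq L_{[T',0]}(t)\) over all of \([T,0]\). For the middle inequality I instead use that \(L_{[T',0]}(t)\tleq U_{[T',0]}(t)\) holds for \emph{every} \(t\in[T',0]\): this is the assertion of Step~\ref{item:lower2} with \(T'\) in place of \(T\), and follows from Theorem~\ref{thm:path-wise-domination1} because \(L_{[T',0]}\) and \(U_{[T',0]}\) have the same sequence of arrival times, while the service durations of \(L_{[T',0]}\), listed in order of initiation of service (which, both processes being FCFS, is order of arrival), are obtained from those of \(U_{[T',0]}\) by replacing the residual workloads attached to the \(\mathcal L_{T'}^*\)-arrivals by zeros, hence are dominated coordinate by coordinate; restriction to \([T,0]\subseteq[T',0]\) then gives what is needed.

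The rightmost inequality \(U_{[T',0]}(t)\tleq U_{[T,0]}(t)\) on \([T,0]\) is the delicate one, and here Theorem~\ref{thm:path-wise-domination2} does the work. The idea is to identify \(U_{[T,0]}\) and \(U_{[T',0]}\) with the Kiefer--Wolfowitz workload vectors of cases~2 and~3 of that theorem, that is, the common RA process \(Y\) switched to FCFS at the deterministic times \(T\) and \(T'\), applied to the shared arrival stream and the shared service durations assigned in order of initiation of service. The one gap in this identification is that the algorithm does not literally run \(Y\) and then change discipline: it \emph{re-instantiates} an FCFS system at time \(T\) (respectively \(T'\)) from the residual workloads of \(Y\) there, with the \(\mathcal L_T^*\)-arrivals (respectively \(\mathcal L_{T'}^*\)-arrivals) placed at the head of the queue in order of their original arrival times (the discipline-switching convention of Theorem~\ref{thm:path-wise-domination2}) and carrying their residual durations \(R_k\). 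But this is exactly the auxiliary ``case \(k'\)'' device used inside the proof of Theorem~\ref{thm:path-wise-domination2}, and claim~(ii) there (applied with switching time \(T'\) for \(U_{[T',0]}\), and with switching time \(T\) for \(U_{[T,0]}\)) shows that such a re-instantiated FCFS system agrees, at and after the switching time, with the corresponding discipline-switching process. Hence \(U_{[T,0]}\) coincides with case~2 for \(t\geq T\) and \(U_{[T',0]}\) with case~3 for \(t\geq T'\), and the Kiefer--Wolfowitz domination of case~3 by case~2 for all \(t\geq T\), asserted in Theorem~\ref{thm:path-wise-domination2}, delivers \(U_{[T',0]}(t)\tleq U_{[T,0]}(t)\) on \([T,0]\). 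Chaining the three inequalities proves the theorem.

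I expect the main obstacle to be exactly this last identification: the care needed to match service durations to initiations of service, not merely within each pair \((L_{[T,0]},U_{[T,0]})\) and \((L_{[T',0]},U_{[T',0]})\) but consistently \emph{between} the two pairs (including the tie-breaking conventions used when several customers are re-instantiated simultaneously at the switching time), so that one genuinely applies Theorems~\ref{thm:path-wise-domination1} and~\ref{thm:path-wise-domination2} to a single shared collection of arrival times and service durations rather than to two unrelated collections. A secondary but still delicate step, to be dispatched first, is the verification that instantiating an FCFS system from residual workloads at time \(T\) reproduces the Kiefer--Wolfowitz vector of the discipline-switching process (the content of claims~(i)--(ii) in the proof of Theorem~\ref{thm:path-wise-domination2}), together with the consistency over \([T,0]\) of the decoded driving data when the cut-off is extended backwards.
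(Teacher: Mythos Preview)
Your proof is correct and follows essentially the same approach as the paper: you establish the consistency of the driving data under backward extension, then invoke Theorem~\ref{thm:path-wise-domination2} (cases~2 and~3) for the \(U\)-comparison and Theorem~\ref{thm:path-wise-domination1} for the \(L\)- and middle comparisons. The only cosmetic difference is that for \(L_{[T,0]}\tleq L_{[T',0]}\) you compare initial workload vectors at time \(T\) and use monotonicity in the initial condition, whereas the paper views both lower processes as starting empty at the earlier time \(T'\) and uses monotonicity in the service durations; both are immediate applications of Theorem~\ref{thm:path-wise-domination1}.
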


\begin{proof}
Let $\hat\tau_j$ (respectively $\hat\tau_j'$) be the first time after $\hat T=-T$ (respectively $\hat T'=-T'$) that $\hat Y_j$ empties, for $j=1,\dots, c$. As noted at the end of Step~\ref{item:RA2}, when we extend the simulation of $\hat Y_j$ from $[0,\hat\tau_j]$ to $[0,\hat\tau_j']$ its path over $[0,\hat\tau_j]$ is unchanged. It follows that the list $\mathcal L_T$ (created in Step~\ref{item:FCFS2} above) is unchanged by such an extension, i.e.
\[ \mathcal L_T = \{ (t,J,S)\in \mathcal L_{T'}: t> T \}. \]
Furthermore, any additional entries created in $\mathcal L_T^*$ when extending from $\hat\tau_j$ to $\hat \tau_j'$ (which must satisfy $t_k\leq T$) have zero residual service durations at time $T$. Extending the simulation of $\hat Y_j$ ($j=1,\dots,c$) from $[0,\hat\tau_j]$ to $[0,\hat\tau_j']$ thus has no effect on the paths of $U_{[T,0]}$ and $L_{[T,0]}$.
We may therefore assume that the lists $\mathcal L_T^*$, $\mathcal L_T$, $\mathcal L_{T'}^*$ and $\mathcal L_{T'}$ are \emph{all} constructed by running each $\hat Y_j$ process over the longer intervals $[0,\hat\tau_j']$.

Now we simply observe that $U_{[T',0]}$ is (as remarked in Step~\ref{item:upper2} above) a process which switches from the $M/G/1\;[RA]$ queue $Y$ to an $M/G/c\;[FCFS]$ queue at time $T'$, whereas $U_{[T,0]}$ switches from $Y$ to $M/G/c\;[FCFS]$ at a later time $T>T'$: Theorem~\ref{thm:path-wise-domination2} shows that $U_{[T,0]}$ must therefore dominate $U_{[T',0]}$. Similarly, $L_{[T,0]}$ and $L_{[T',0]}$ are two $M/G/c\;[FCFS]$ processes, which can be viewed as both starting from empty at time $T'$ and with all service durations corresponding to pre-$T$ arrival times set to zero for $L_{[T,0]}$: Theorem~\ref{thm:path-wise-domination1} shows that $L_{[T',0]}$ dominates $L_{[T,0]}$, as required.
\end{proof}

Bearing in mind the remark made at the end of Theorem \ref{thm:path-wise-domination1}, 
we see that coalescence of the two sandwiching processes can occur only when both upper and lower sandwiching processes have strictly fewer than \(c\) individuals in system.
There follows an almost obvious remark: in case \(c=1\) Algorithm \ref{alg:2} offers no advantage over Algorithm \ref{alg:1}, which itself reduces to the \(c=1\) case of  \cite{Sigman-2011}.
However if \(c>1\) then it is possible for Algorithm \ref{alg:2} to produce coalescence when started prior to the latest time (prior to time \(0\)) at which the equilibrium queue 
has an idle server. For large \(c\) it follows that Algorithm \ref{alg:2} offers substantial practical advantages in terms of reduced run-time.


\section[Assessment of algorithms]{Assessment of algorithms for $M/M/c$ case \emph{etc}}\label{sec:mmc}

So far we have introduced, and proved the correctness of, two algorithms for perfectly sampling from the stationary distribution of the Kiefer-Wolfowitz workload vector for stable $M/G/c$ queues. 
In this section we briefly investigate and compare the performance of these algorithms, mainly in the special case when service durations are Exponentially distributed (i.e. for an $M/M/c$ queue). 
We begin with a discussion of choice of back-off strategy for Algorithm \ref{alg:2}, 
and then present some simulation results which indicate that this algorithm may be substantially faster than (the rather na{\"\i}ve) Algorithm \ref{alg:1}. 
These observations are reinforced by theoretical bounds on the run-time of the two algorithms, which can be found in Section \ref{sec:convergence-rates}. 
We do not here present
a complete analysis of our algorithms' performance, but we do elucidate
the relative efficiency of Algorithm \ref{alg:2}.

\subsection{Back-off strategies}\label{sec:back-off}

In Algorithm \ref{alg:2} it is necessary to specify a method for choosing the sequence of times $\{\hat T, \hat T', \dots \}$ at which to check for coalescence. 
We briefly discuss two options.
The first is to use the well-known `binary back-off' method (set $\hat T=1, \hat T' = 2$, and continue to double in this way for as long as necessary), as is employed in many CFTP algorithms. The second is to use a sequence of stopping times determined by the dominating $\hat Y_j$ processes. For $j=1,\dots,c$ let $\hat \tau_j = \inf\{t>0:|\hat Y_j| = 0\}$, and let 
$\hat \tau^-$ and $\hat\tau^+$ be the minimum and maximum of these times; suppose that server $j^-$ is the one that empties for the first time at $\hat\tau^-$. The first time at which we can possibly check for coalescence is when running $U$ and $L$ over $[-\hat\tau^-,0]$. If this doesn't lead to coalescence then the path of $\hat Y_{j^-}$ needs to be extended until it empties once again, at which point we update the values of $\hat \tau^-$, $\hat\tau^+$ and $j^-$ and repeat. However, since server $j^-$ is starting from empty at time $\hat\tau^-$, it is quite likely to empty again after only a relatively short period of time, and it may therefore be computationally expensive to check for coalescence as soon as this server is once again empty. In what follows we make use of a binary back-off strategy whenever making use of Algorithm \ref{alg:2}.


\subsection[Example of simulation output]{Example of simulation output}\label{sec:simulation}

Both of our two algorithms produce a perfect sample from the stationary distribution of the Kiefer-Wolfowitz workload vector. Figure \ref{fig:KWuniform} shows the result of using Algorithm \ref{alg:2} for an $M/G/c$ queue with $\lambda=c=25$ and service distributions following a $\text{Uniform}[0,1]$ distribution; here we have chosen to display the last six coordinates of the workload vector (for which, recall, the coordinates are ordered monotonically by remaining workload).

\medskip
\begin{figure}[th]
\centering
\includegraphics[width=\textwidth]{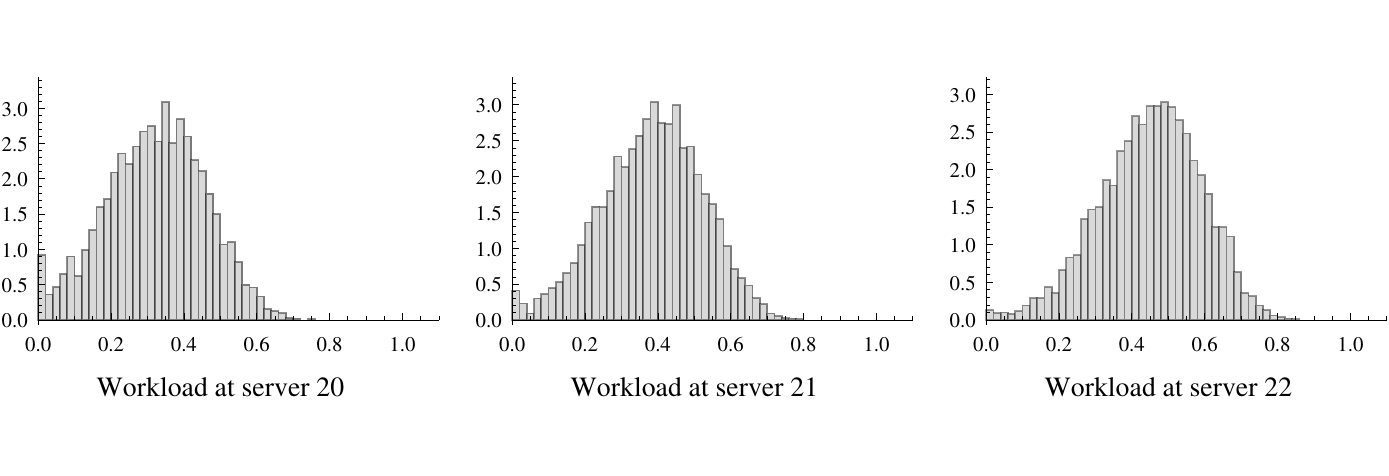}\vspace{-0.4cm}
\includegraphics[width=\textwidth]{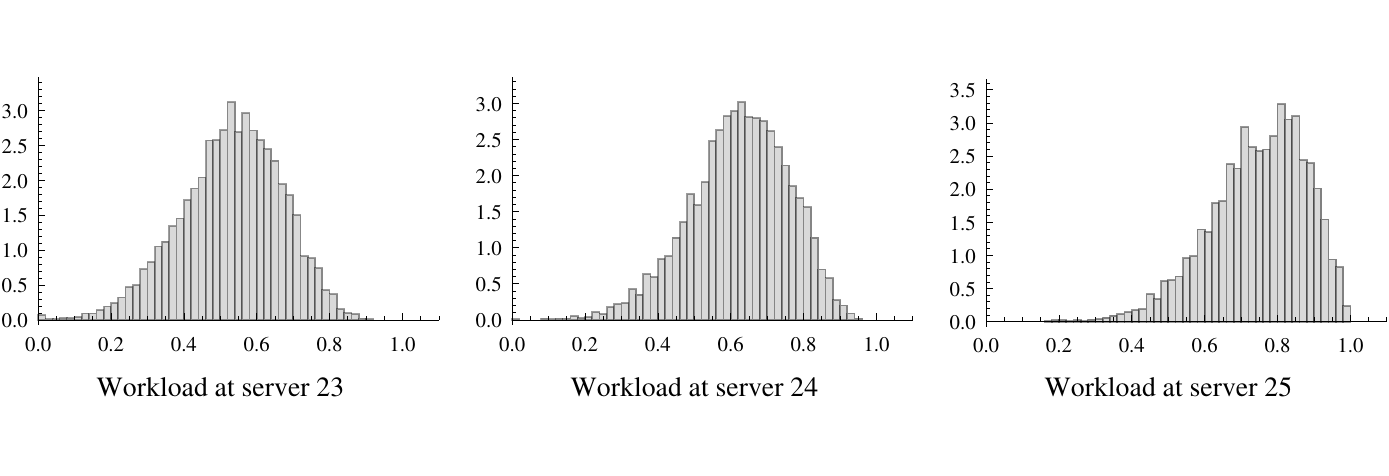}\vspace{-0.6cm}
\caption{Equilibrium distribution of the final 6 coordinates 
of the Kiefer-Wolfowitz workload vector when $\lambda=c=25$ and service durations are uniformly distributed on $[0,1]$. (Produced from 5,000 draws using Algorithm \ref{alg:2}.)   
}\label{fig:KWuniform}
\end{figure}

\begin{figure}[ht]
\centering
\includegraphics[width=0.6\textwidth]{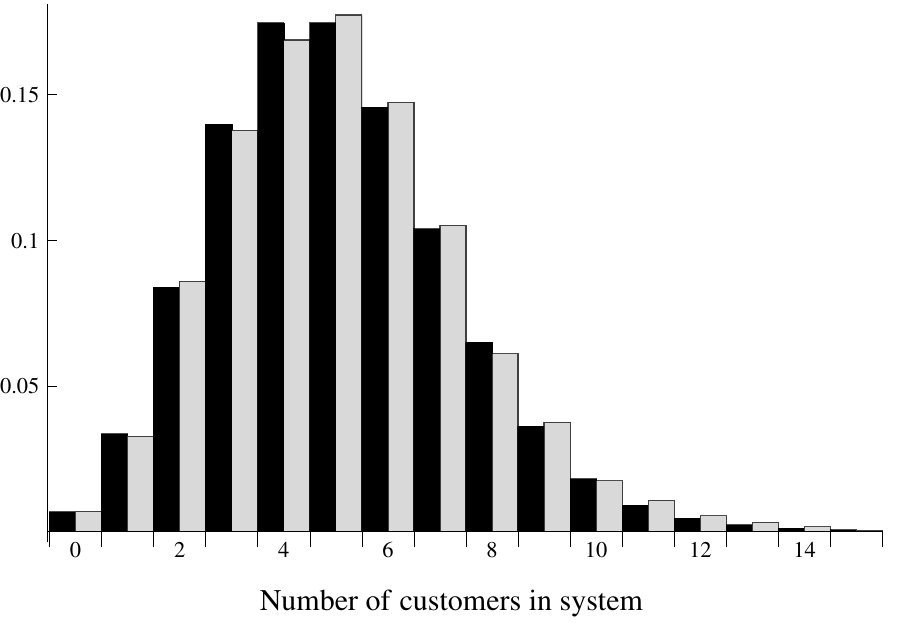}
\caption{Number of customers for an $M/M/c$ queue in equilibrium when $\lambda=10$, $\mu = 2$ and $c=10$. 
Black bars show the theoretical number of customers in the system; light grey bars show the result of 5,000 draws using Algorithm \ref{alg:2}.}\label{fig:hist1}
\end{figure}


\begin{figure}[h]
\centering
  \includegraphics[width=0.6\textwidth]{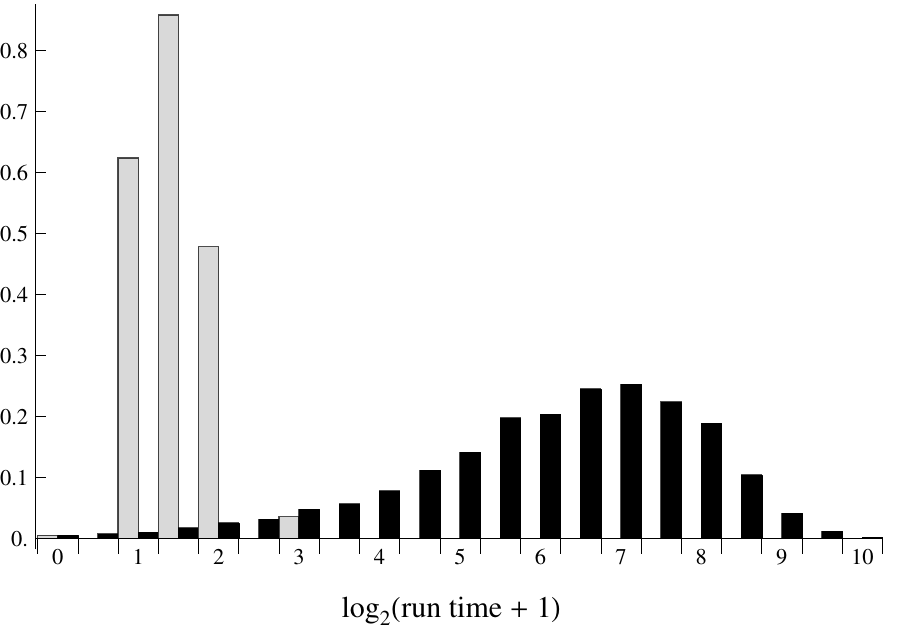}
  \caption{Distribution of time taken for coalescence to be detected under Algorithms \ref{alg:1} and \ref{alg:2} applied to an \(M/M/c\) queue, 
for 5,000 runs with $\lambda=10$, $\mu=2$ and $c=10$. 
Black bars show the distribution of $\log_2(\hat\tau+1)$ for Algorithm \ref{alg:1}, where $\hat\tau$ is the first time at which $\hat Y$ empties; light grey bars show the distribution of $\log_2(\hat T +1)$ for Algorithm \ref{alg:2}, where $\hat T$ is the smallest time needed to detect coalescence using binary back-off.
}\label{fig:hist2}
\end{figure}

When service durations follow an Exponential distribution (\emph{i.e.} $X$ is an $M/M/c$ queue) there is a well-known closed form for the distribution of the number of customers in the system under stationarity:
\[ 
\pi_k
\quad=\quad \left(\frac{\rho}{c}\right)^k \frac{c^{(k\wedge c)}}{(k\wedge c)!} \; \pi_0 \,, \quad\text{ for } k \geq 0\,. 
\]
We have compared the theoretical distribution to the empirical distribution obtained by output from large numbers of runs of Algorithm \ref{alg:2} for a wide variety of different sets of parameter values and achieved good agreement: by way of illustration, the result of doing this when $\lambda=10$, $\mu=2$ and $c=10$ is shown in Figure~\ref{fig:hist1}. 
Note that these parameters clearly satisfy $1< \rho =\lambda/\mu < c$, and so this is an example of a stable, but not super-stable, queue.
A chi-squared test between the theoretical and observed distributions here gave a \(p\)-value of \(0.62\), indicating good agreement.

It is also of interest to compare how far one has to simulate the
dominating process $\hat Y$ for each algorithm, and we have performed
such a comparison for a variety of sets of parameter values. In
Figure~\ref{fig:hist2} we give an indication of how much quicker it
may be to detect coalescence via Algorithm \ref{alg:2} rather than
simply waiting for $\hat Y$ to empty (as in Algorithm
\ref{alg:1}). For this example we once again set $\lambda=10$, $\mu=2$
and $c=10$, and we performed 5,000 runs of each algorithm. For
Algorithm \ref{alg:1} we recorded the value of $\hat\tau$  (the time
taken for $\hat Y$ to empty), while for Algorithm \ref{alg:2} we
employed a binary back-off approach (as is common in many CFTP
algorithms) and recorded the minimum value of $\hat T$ needed to
determine coalescence of our upper and lower sandwiching
processes. Note that the binary back-off approach means that it is
possible for Algorithm \ref{alg:2} to take longer than Algorithm
\ref{alg:1} to detect coalescence (e.g. if $\hat Y$ empties at time
$0<\hat\tau<1$ then Algorithm \ref{alg:2} won't detect this until
$\hat T = 1$; similar phenomena arise in several perfect simulation
algorithms involving binary back-off) but that in general Algorithm
\ref{alg:2} is significantly faster. In Figure~\ref{fig:hist3}(a) and
(b) we show similar run-time results for Algorithm \ref{alg:2} using substantially larger values of $\lambda$ and $c$ (while maintaining $\rho=\lambda/2$). The coalescence time \(\hat\tau\) here clearly does not increase significantly: in the following section we give an heuristic argument which explains why this is to be expected, at least when service times are exponentially distributed.

Of course, such a comparison does not take into account the additional computational demands of checking for coalescence (usually repeatedly) in Algorithm \ref{alg:2}, nor the fact that some of the servers in the $[M/G/1]^c$ process may not empty until a time which is significantly greater than $\hat T$ (especially when $\rho$ is close to 1), and so this is by no means a complete discussion of the relative efficiency of each algorithm. 
However, it emphasizes just how much sooner it is possible for coalescence to be detected, without the need to wait for the dominating process to empty completely.
(Note that the computational demands of Algorithms \ref{alg:1} and \ref{alg:2} may be compared as follows: running from an emptying time \(\tau<0\), Algorithm \ref{alg:1} requires simulation of an \([M/G/1]^c\) and an \(M/G/c\;[FCFS]\).
Running from a time \(T<0\), Algorithm \ref{alg:2} requires simulation of an \([M/G/1]^c\) and \emph{two} coupled \(M/G/c\;[FCFS]\). Bearing this in mind, 
and exploiting the remark after the proof of Theorem \ref{thm:path-wise-domination1},
the choice between Algorithms \ref{alg:1} and \ref{alg:2} should depend
on heuristic comparison of first moments of emptying time \(\tau\) and the latest time (prior to \(0\)) at which the equilibrium queue \(M/G/c\;[FCFS]\) has an idle server.)


\subsection{Notes on convergence rates}\label{sec:convergence-rates}

\begin{figure}[h]
\centering
  \includegraphics[width=0.48\linewidth]{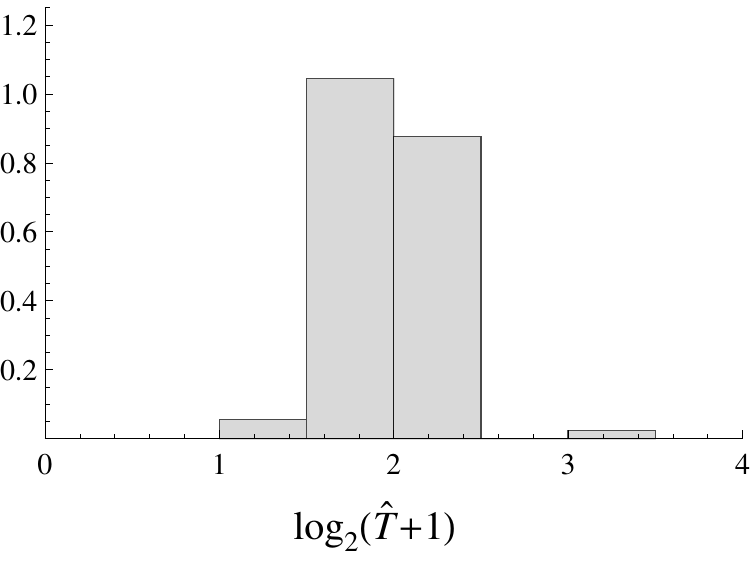}
  \includegraphics[width=0.48\linewidth]{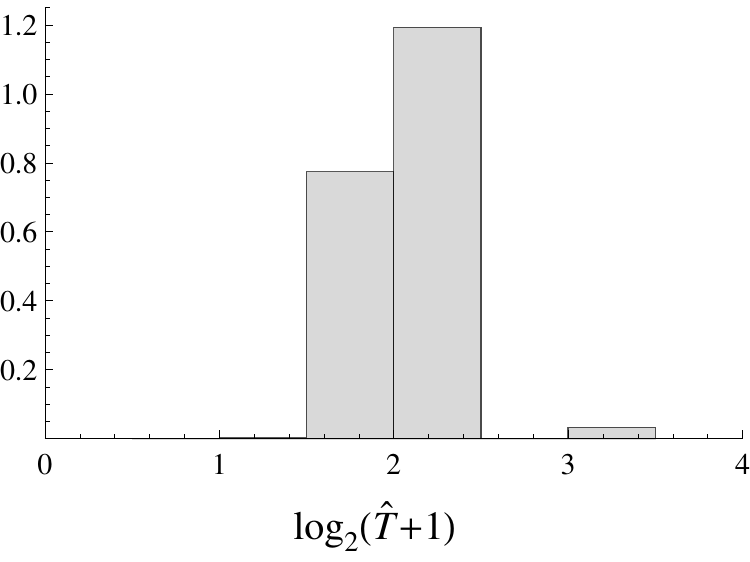}
\hspace{5em}(a) \hspace{0.44\linewidth} (b)
\caption{Sample of run-time distributions for a selection of $M/M/c$
  queues. (a) Coalescence time (measured as in Figure~\ref{fig:hist2})
  for Algorithm \ref{alg:2} applied to an \(M/M/c\) queue for 5,000
  runs with $\lambda=30$, $\mu=2$ and $c=30$. Chi-squared test on
  distribution of number of customers in system gave a \(p\)-value of
  \(0.84\). (b) Coalescence time (measured as in
  Figure~\ref{fig:hist2}) for Algorithm \ref{alg:2} applied to an
  \(M/M/c\) queue for 5,000 runs with $\lambda=50$, $\mu=2$ and $c=50$. Chi-squared test on distribution of number of customers in system gave a \(p\)-value of \(0.38\).}\label{fig:hist3}
\end{figure}

The run-time of Algorithm \ref{alg:1} is equal to the time taken for the $[M/G/1\;PS]^c$ dominating process $\hat Y$ to empty. It is well known  \cite[Theorem 5.7]{Asmussen-2003} that in equilibrium the mean time for the $M/G/1$ queue to empty is finite if and only if the service duration distribution has finite second moment. Using our standing assumption that $\Expect{S^2}<\infty$, it follows that each server in $\hat Y$ will almost surely empty in finite time, and that the time taken until we see a simultaneous empty period for all $c$ servers (that is, the time taken for the dominating process to completely empty) will itself be finite. Thus Algorithm \ref{alg:1} has finite mean run-time if and only if $\Expect{S^2}<\infty$. The same observation holds for Algorithm \ref{alg:2}, for which a finite run-time is also dependent upon each server in $\hat Y$ emptying in finite time.

Stronger conditions on the moments of $S$ allow a better bound on run-times. For example, note that the time taken for $\hat Y$ to completely empty has an exponential moment (that is, $\hat Y$ is geometrically ergodic) if and only if the $M_{\lambda/c}/G/1$ queue (and indeed the $M/G/c\;[FCFS]$ target process) is geometrically ergodic; this is equivalent to $S$ itself having a finite exponential moment  \cite[Theorem 16.4.1]{Meyn-2009}. More general conditions for existence of moments of the stationary waiting time for a $GI/GI/c$ queue have recently been determined in  \cite{FossKorshunov-2012}.

Existence of an exponential moment is rather a strong demand, but bounds on algorithm run-time can still be produced under weaker drift conditions. In particular, if $\Expect{S^m}<\infty$ for some $m\geq 2$ then Hou and Liu \cite{Hou-2004} show the embedded $M/G/1$ queue (and hence the embedded $M/G/c$ queue) to be polynomially ergodic, which in turn implies that the run-time of Algorithms \ref{alg:1} and \ref{alg:2} will possess a finite $m^{\text{th}}$ moment. (A similar result is true in a much wider context:  Connor and Kendall \cite{ConnorKendall-2007a} describe a generic (but impractical) perfect simulation algorithm for a class of so-called \emph{tame} chains. This extends the work in \cite{Kendall-2004c} to chains which satisfy a geometric Foster-Lyapunov drift condition at a \emph{state-dependent subsampling time}: see  \cite{ConnorFort-2009} for more details. In particular, if the $M/G/c$ queue is tame, with $\Expect{S^m}<\infty$ for some $m\geq 2$, then Proposition 4.3 of  \cite{ConnorFort-2009} can be used to show that the dominating process of  \cite{ConnorKendall-2007a} for the $M/G/c$ queue is polynomially ergodic.)

The mean run-time behaviour of Algorithm \ref{alg:1} can be estimated using simple renewal-theoretic arguments. 
First, consider an \(M/G/1\) queue with service duration distributed as the random variable \(S\), and with arrival intensity \(\lambda/c\).
By Pollaczek-Khintchine theory, in statistical equilibrium the probability of this being empty is \(1-\rho/c\), where \(\rho=\lambda\Expect{S}\)  \cite[Theorem 5.2]{Asmussen-2003}.
Consequently the \([M/G/1]^c\) dominating process, used in both Algorithms \ref{alg:1} and \ref{alg:2}, has probability \((1-\rho/c)^c\) 
of being completely empty at a given time.

Second, consider the start- and end-times of the busy periods of the whole system \([M/G/1]^c\). 
These form an alternating renewal process: completely empty periods have Exponential\((\lambda)\) durations,
while busy periods are distributed as a random variable \(B_1\), being the time it takes for \([M/G/1]^c\) to empty completely if it starts off with just one new customer.
Alternating renewal theory allows us to deduce
\[
 \frac{1/\lambda}{1/\lambda + \Expect{B_1}}\quad=\quad (1-\rho/c)^c \,. 
\]
Now let \(B_e\) be the time till the queue empties, if it is started in equilibrium. 

A stochastic comparison argument shows that \(B_e \) stochastically dominates \(B_1\) except when the equilibrium queue is empty (probability \((1-\rho/c)^c\)).
Consequently we may deduce
\begin{align*}\label{eqn:lower-bound-alg-1}
 \Expect{B_e}\quad\geq\quad 
(1-(1-\rho/c)^c) \Expect{B_1}\quad&=\quad 
(1-(1-\rho/c)^c) \frac{(1-\rho/c)^{-c}-1}{\lambda} \\
&\geq\quad
\frac{(1-\rho/c)^{-c}-2}{\lambda} \,. 
\end{align*}
This carries over to a lower bound on the run-time of Algorithm \ref{alg:1}, which is given by the complete emptying time of an \([M/G/1 \; PS]^c\) system.

It is instructive to consider specific cases: Table \ref{table:1} shows how the lower bound increases quickly with the arrival rate when $\lambda=c = 2\rho$, and indicates the effect of increasing $\rho$ when $\lambda=c$ is held constant. The lower bound when $\lambda=c=10$ and $\rho=5$ is comparable to the simulation results for Algorithm \ref{alg:1} displayed in Figure~\ref{fig:hist2}, for which the mean run-time was 143. Furthermore, this analysis indicates that Algorithm \ref{alg:1} is infeasible for large \(c\), even when $\rho/c$ is significantly smaller than \(1\).

\begin{table}[th]
\begin{center}
\caption{Lower bound on the mean run time for Algorithm \ref{alg:1} ($\Expect{B_e}$) for some specific queue parameters.} 
\begin{tabular}{llll|llll}
\hline
 \(\lambda\) & \(c\)  &\(\rho\) & lower bound on $\Expect{B_e}$  &  \(\lambda\) & \(c\)  &\(\rho\) & lower bound on $\Expect{B_e}$ \\
\hline
 10                & 10           & 5                          & \(102\) & 30                & 30           & 5                          & \(7.85\) \\
 20                & 20           & 10                         & \(52429\) &  30                & 30           & 10                         & \(6392\)\\
 30                & 30           & 15                         & \(3.58 \times 10^7\) & 30                & 30           & 20                         &  \(6.86 \times 10^{12}\) \\
 40                & 40           & 20                         & \(2.75 \times 10^{10}\) & 30                & 30           & 25                         & \(7.37 \times 10^{21}\) \\
 50                & 50           & 25                         & \(2.25 \times 10^{13}\) & 30                & 30           & 29.5                         & \(7.37 \times 10^{51}\)\\
\hline
\end{tabular}
\label{table:1}
\end{center}
\end{table}

A corresponding analysis for Algorithm \ref{alg:2} is more intransigent, as one has to estimate the mean coupling time of upper- and lower-processes which are coupled \(M/G/c\), coupled by having the
same arrival processes and obtaining service durations in the same sequential order from a fixed sequence. 
We are not yet able to give a useful analysis of this coupling, which would involve consideration of the coupled Kiefer-Wolfowitz workload vectors.
Instead we offer an heuristic argument, working instead with the Markov processes given by numbers of customers in system for two coupled \(M/M/c\) queues with the same stable parameters.
These queues \(X\) and \(Y\) are defined as follows: \(X\) is begun at \(X_0\), a draw from the stationary distribution; \(Y\) is begun at \(Y_0=0\).
Both \(X\) and \(Y\) use the same Poisson stream of arrivals.
Departures from \(X\) and \(Y\) are coupled so that \(X\geq Y\) always: any departure from \(Y\) always coincides with a departure from \(X\). 
Thus the continuous-time Markov chains \(X\) and \(Y\) are \emph{immersion coupled} (\cite{Kendall-2013a}; this kind of coupling is also called \emph{Markovian} or \emph{co-adapted}):
their joint transition rates are given by
\begin{align*}
 X \to X+1\,,Y \to Y+1 &\quad \text{at rate }\lambda\,,\\
 X \to X-1\,,Y \to Y-1 &\quad \text{at rate } (Y\wedge c)\mu\,,\\
 X \to X-1\,,Y \to Y   &\quad \text{at rate } ((X\wedge c)-(Y\wedge c))\mu\,.
\end{align*}
We wish to consider \(\Expect{T_\text{couple}}\), where 
\[
 T_\text{couple}\quad=\quad \inf\left\{t: X_t=Y_t\right\}\,.
\]
Note that \(\Prob{T_\text{couple}\leq t}=\Prob{\text{upper- and lower-processes (begun at time \(-t\)) coalesce by time \(0\)}}\).

To this end, we introduce a further process \(Z\geq X\geq Y\), with \(Z_0=X_0\)
and coupled to \(Y\) as follows:
\begin{align*}
 Z \to Z+1\,,Y \to Y+1 &\quad \text{at rate }\lambda\,,\\
 Z \to Z-1\,,Y \to Y-1 &\quad \text{at rate } (Y\wedge c)\mu\,,\\
 Z \to Z-1\,,Y \to Y   &\quad \text{at rate } \mu \quad \text{ when } Y<c \text{ and } Z>Y\,.
\end{align*}
Then \(\widetilde{T}_\text{couple}\) stochastically dominates \(T_\text{couple}\), hence \(\Expect{\widetilde{T}_\text{couple}}\geq \Expect{T_\text{couple}}\), where
\[
 \widetilde{T}_\text{couple}\quad=\quad \inf\left\{t: Z_t=Y_t\right\}\,.
\]

To estimate \(\Expect{\widetilde{T}_\text{couple}}\), it suffices to find positive constants \(\alpha\) and \(\beta\) such that
\[
 U \quad=\quad \alpha (Z-Y) + \beta Y + t
\]
is a non-negative supermartingale up to the coupling time \(\widetilde{T}\). 
For then we can apply the methodology of the proofs of Foster-Lyapunov criteria: \(\Expect{\widetilde{T}_\text{couple}}\leq\Expect{U_{\widetilde{T}_\text{couple}}}\leq \Expect{U_0}=\alpha\Expect{Z_0}=\alpha\Expect{X_0}\), 
which last can be computed using detailed balance  \cite[page 77]{Asmussen-2003}.
Accordingly, consider the transition rates of the Markov chain \((Z,Y)\): using these we may deduce that, before the coupling time,
\[
 \Expect{U_{t+\delta t} | U_t} - U_t \quad=\quad
 -\alpha \mu \Indicator{Y_t<c} \delta t - \beta (\mu(Y_t\wedge c)-\lambda) \delta t + \delta t + o(\delta t)\,.
\]
For \(U\) to be a supermartingale before the coupling time, it is necessary and sufficient that this expression be non-positive for small \(\delta t\). Non-positivity follows if
\begin{align*}
 \beta (c \mu -\lambda ) \quad&\geq\quad 1 \qquad (\text{case } Y_t\geq c)\,,\\
 \alpha \mu + \beta (\mu(Y_t\wedge c)-\lambda) \quad\geq\quad \alpha \mu-\beta \lambda \quad&\geq\quad 1 \qquad (\text{case } Y_t< c)\,.
\end{align*}
Using \(\rho=\lambda/\mu\), we set
\begin{align*}
 \beta  \quad&=\quad \frac{1}{\lambda}\frac{\rho}{c-\rho}\,,\\
 \alpha \quad&=\quad \frac{1}{\lambda}\frac{c\rho}{c-\rho}\,,
\end{align*}
(note that stability of the \(M/G/c\) queues requires \(\rho\leq c\)) and deduce
\begin{equation}\label{eqn:upper-bound-alg-2-heuristic}
 \Expect{T_\text{couple}}\quad\leq\quad \frac{1}{\lambda} \frac{c\rho}{c-\rho} \Expect{X_0}\,.
\end{equation}

Again, it is instructive to consider specific cases: Table \ref{table:2} tabulates corresponding heuristic over-estimates for Algorithm \ref{alg:2}  for the same ranges of parameter values
as found in Table \ref{table:1}. Note that \(\rho=\lambda\Expect{S}\). 
Note too that the large growth in mean run-time at the foot of the second column of Table \ref{table:2} is an inevitable consequence of heavy traffic in the dominating \([M/G/1]^c\) queue.

\begin{table}[ht]
\begin{center}
\caption{Heuristic over-estimate of the mean run time for Algorithm \ref{alg:2} for some specific queue parameters.} 
\begin{tabular}{llll|llll}
\hline
 \(\lambda\) & \(c\)  &\(\rho\) & \emph{heuristic} upper bound&  \(\lambda\) & \(c\)  &\(\rho\) & \emph{heuristic} upper bound \\
& & &   of mean run-time & & & &  of mean run-time \\
\hline
 10                & 10           & 5                          & \(5.04\) & 30 & 30 & 5 & \(1.00\) \\
 20                & 20           & 10                         & \(10.00\)& 30 & 30 & 10 & \(5.00\)  \\
 30                & 30           & 15                         & \(15.00\) & 30 & 30 & 20 & \(40.10\) \\
 40                & 40           & 20                         & \(20.00\) & 30 & 30 & 25 & \(131.25\) \\
 50                & 50           & 25                         & \(25.00\) & 30 & 30 &29.5 & \(4853.97\) \\
\hline
\end{tabular}
\label{table:2}
\end{center}
\end{table}

Compare the results on \(\log_2\) run-times displayed in
Figure~\ref{fig:hist2} and Figure~\ref{fig:hist3} (for which the mean
values of $\hat T$ for the results in Figure~\ref{fig:hist2} ($\lambda=c=10,\, \rho=5$), Figure~\ref{fig:hist3}(a) ($\lambda=c=30,\, \rho=15$) and Figure~\ref{fig:hist3}(b) ($\lambda=c=50,\, \rho=25$) are 2.27, 2.99 and 3.32 respectively).
Bearing in mind the demands of binary back-off,
this suggests that this heuristic over-estimate is a reasonable indication of the feasibility of Algorithm \ref{alg:2} for substantial values of \(c\).

\section{Conclusion}\label{sec:conclusion}

In this paper we have described the construction of two dominated CFTP Algorithms \ref{alg:1} and \ref{alg:2} for a general stable \(M/G/c\; [FCFS]\) queue,
and have shown that the algorithms have finite mean run-time when the typical service duration has finite second moment.

The second of these, Algorithm \ref{alg:2}, is more complex and requires more delicacy and care in description and in implementation; 
however, despite this increased complexity, it demonstrates the potential for considerably reduced actual run-times compared
with the first, more na{\"\i}ve, algorithm. In particular, Algorithm \ref{alg:2} will be preferable in cases when the \(M/G/c\; [FCFS]\) queue
is stable rather than super-stable. This is because Algorithm \ref{alg:1} has run-time comparable to the time at which the queue first empties;
this time may be expected to be large when super-stability fails.

There has been significant contemporary interest in perfect simulation for queueing problems: we have noted the work of  \cite{BlanchetDong-2013}
for $GI/GI/c/c$ \emph{loss} processes, and of  \cite{MousaviGlynn-2013} on a Brownian model for heavy traffic situations. 
 A particular motivation for this is provided in \cite{GuptaHarcholBalterDaiSwart-2009}, where it is established that
a crucial measure of queue efficiency (mean waiting time) can be substantially affected by more subtle features than simply the first two moments of service time and arrival rate.
In such cases it is of clear value to have access to perfect simulation methods such as these two dominated CFTP algorithms.

We close by mentioning four further questions raised by this work:
\begin{enumerate}
\item 
It is natural to ask whether dominated CFTP methods can be extended
to the case of renewal process input; the work of  \cite{BlanchetDong-2013} on the same problem in the context of loss processes may be very helpful here. 
We believe this should be feasible, particularly because the ``impractical'' dominated CFTP algorithms for polynomially ergodic Markov chains address similar problems
 \cite{ConnorKendall-2007a}.
However we have not considered the details of such an extension.
\item
It would be interesting to know whether anything theoretical can be said about the relative merits of the two back-off strategies for Algorithm \ref{alg:2} outlined in Section~\ref{sec:back-off}.
\item
It is natural to ask whether it is possible to implement dominated CFTP for a suitably wide class of queueing networks.
 Sigman \cite{Sigman-2013} describes several applications to networks of the method in \cite{Sigman-2011} for the super-stable case.
It is not clear to us how one would contrive to construct a dominating process for such problems in the stable case, so this question seems to us to be challenging.
\item
Finally one might ask whether in the context of \(M/G/c\; [FCFS]\) it is possible to carry out dominated CFTP simultaneously for a suitable range of \(c\) the number of servers;
what might be described as ``omnithermal dominated CFTP'', to borrow a term used to describe  Grimmett's coupling of random-cluster processes
for all values of a specific parameter \cite{Grimmett-1995}, and applied to CFTP in  \cite{ProppWilson-1996a}. 
In his PhD thesis \cite{Shah-2004}, Shah showed how to implement omnithermal dominated CFTP for area-interaction point processes; 
the issue for stable \(M/G/c\; [FCFS]\) queues is one of detecting at what stage there is coalescence for all \(c\) in the range.
It is straightforward to carry out omnithermal dominated CFTP in the case of Algorithm \ref{alg:1} in a relatively efficient manner: 
once an emptying time \(\tau\) has been established for the instance with lowest \(c\) in range,
then this will serve for all higher values of \(c\), using a simple workload domination argument (see, eg,  \cite{Moyal-2013}).
(Indeed, the value of \(\tau\) can be updated to the most recent emptying time for the current value of \(c\) after the queue has been simulated for that value.)
However it is an open question whether one can establish a comparably efficient omnithermal dominated CFTP based on Algorithm \ref{alg:2}.
\end{enumerate}

\acks
SBC was supported by EPSRC Research Grant EP/J009180; WSK was supported in part by EPSRC Research Grant  EP/K013939.

%
%
%
%

\bibliographystyle{apt}
\bibliography{PerfectQueues}

\begin{thebibliography}{10}

\bibitem{Asmussen-2003}
{\sc Asmussen, S.} (2003).
\newblock {\em {Applied probability and queues}} second~ed.
\newblock Springer, New York; Berlin; Heidelberg.

\bibitem{BeskosRoberts-2005a}
{\sc Beskos, A. and Roberts, G.~O.} (2005).
\newblock {Exact Simulation of Diffusions}.
\newblock {\em The Annals of Applied Probability\/} {\bf 15,} 2422--2444.

\bibitem{BlanchetDong-2013}
{\sc Blanchet, J. and Dong, J.} (2013).
\newblock {Perfect sampling for infinite server and loss systems}.
\newblock {\em arXiv\/} {\bf 1312.4088,} 30 pp.

\bibitem{BlanchetWallwater-2014}
{\sc Blanchet, J. and Wallwater, A.} (2015).
\newblock {Exact Sampling of Stationary and Time-Reversed Queues}.
\newblock {\em Adv. Appl. Prob.\/} {\bf 47,} 761--786.

\bibitem{BorovkovFoss-1992}
{\sc Borovkov, A.~A. and Foss, S.~G.} (1992).
\newblock {Stochastically recursive sequences and their generalizations}.
\newblock {\em Siberian Adv. Math.\/} {\bf 2,} 16--81.

\bibitem{BlanchetChen-2014}
{\sc Chen, X. and Blanchet, J.} (2015).
\newblock {Steady-state simulation of reflected Brownian motion and related
  stochastic networks}.
\newblock {\em To appear in Ann. Appl. Prob.\/} {\bf 25,} 3209--3250.

\bibitem{ConnorFort-2009}
{\sc Connor, S.~B. and Fort, G.} (2009).
\newblock {State-dependent Foster-Lyapunov criteria for subgeometric
  convergence of Markov chains}.
\newblock {\em Stochastic Processes and their Applications\/} {\bf 119,}
  4176--4193.

\bibitem{ConnorKendall-2007a}
{\sc Connor, S.~B. and Kendall, W.~S.} (2007).
\newblock {Perfect simulation for a class of positive recurrent Markov chains}.
\newblock {\em The Annals of Applied Probability\/} {\bf 17,} 781--808.
\newblock Correction in volume 17, 1808--1810.

\bibitem{FernandezFerrariGarcia-2002}
{\sc Ferrari, P.~A., Fern\'{a}ndez, R. and Garcia, N.~L.} (2002).
\newblock {Perfect simulation for interacting point processes, loss networks
  and Ising models}.
\newblock {\em Stochastic Processes and Their Applications\/} {\bf 102,}
  63--88.

\bibitem{FossKorshunov-2012}
{\sc Foss, S. and Korshunov, D.} (2012).
\newblock {On Large Delays in Multi-Server Queues with Heavy Tails}.
\newblock {\em Math. Oper. Res.\/} {\bf 37,} 201--218.

\bibitem{Grimmett-1995}
{\sc Grimmett, G.} (1995).
\newblock {The stochastic random-cluster process and the uniqueness of
  random-cluster measures}.
\newblock {\em The Annals of Probability\/} {\bf 23,} 1461--1510.

\bibitem{GuptaHarcholBalterDaiSwart-2009}
{\sc Gupta, V., Harchol-Balter, M., Dai, J.~G. and Zwart, B.} (2009).
\newblock {On the inapproximability of M/G/K: why two moments of job size
  distribution are not enough}.
\newblock {\em Queueing Systems\/} {\bf 64,} 5--48.

\bibitem{Hou-2004}
{\sc Hou, Z. and Liu, Y.} (2004).
\newblock {Explicit Criteria for Several Types of Ergodicity of the Embedded
  $M/G/1$ and $GI/M/n$ Queues}.
\newblock {\em Journal of Applied Probability\/} {\bf 41,} 778--790.

\bibitem{Kendall-1998d}
{\sc Kendall, W.~S.} (1998).
\newblock {Perfect simulation for the area-interaction point process}.
\newblock In {\em Probability Towards 2000}. ed. L.~Accardi and C.~C. Heyde.
\newblock Springer-Verlag, New York.
\newblock pp.~218--234.

\bibitem{Kendall-2004c}
{\sc Kendall, W.~S.} (2004).
\newblock {Geometric Ergodicity and Perfect Simulation}.
\newblock {\em Electronic Communications in Probability\/} {\bf 9,} 140--151.

\bibitem{Kendall-2005a}
{\sc Kendall, W.~S.} (2005).
\newblock {Notes on Perfect Simulation}.
\newblock In {\em Markov chain Monte Carlo: Innovations and Applications}. ed.
  {W.S. Kendall and F. Liang and J.-S. Wang}.
\newblock World Scientific, Singapore pp.~93--146.

\bibitem{Kendall-2013a}
{\sc Kendall, W.~S.} (2014).
\newblock {Coupling, local times, immersions}.
\newblock {\em Bernoulli\/} to appear.

\bibitem{KendallMoeller-2000}
{\sc Kendall, W.~S. and M{\o}ller, J.} (2000).
\newblock {Perfect simulation using dominating processes on ordered state
  spaces, with application to locally stable point processes}.
\newblock {\em Advances in Applied Probability\/} {\bf 32,} 844--865.

\bibitem{KieferWolfowitz-1955}
{\sc Kiefer, J. and Wolfowitz, J.} (1955).
\newblock {On the theory of queues with many servers}.
\newblock {\em Transactions of the American Mathematical Society\/} {\bf 18,}
  1--18.

\bibitem{Loynes-1962}
{\sc Loynes, R.} (1962).
\newblock {The stability of a queue with non-independent inter-arrival and
  service times}.
\newblock {\em Math. Proc. Camb. Phil. Soc.\/} {\bf 58,} 497--520.

\bibitem{Meyn-2009}
{\sc Meyn, S.~P. and Tweedie, R.~L.} (2009).
\newblock {\em Markov chains and stochastic stability}.
\newblock Cambridge University Press.

\bibitem{MousaviGlynn-2013}
{\sc Mousavi, M. and Glynn, P.~W.} (2013).
\newblock {Exact Simulation of Non-stationary Reflected Brownian Motion}.
\newblock {\em arXiv\/} {\bf 1312.6456,} 1--41.

\bibitem{Moyal-2013}
{\sc Moyal, P.} (2013).
\newblock {A pathwise comparison result for parallel queues}.
\newblock {\em arXiv\/} {\bf 1301.6364,} 1--10.

\bibitem{MurdochTakahara-2006}
{\sc Murdoch, D.~J. and Takahara, G.} (2006).
\newblock {Perfect Sampling for Queues and Network Models}.
\newblock {\em ACM Transactions on Modeling and Computer Simulation\/} {\bf
  16,} 76--92.

\bibitem{ProppWilson-1996a}
{\sc Propp, J.~G. and Wilson, D.~B.} (1996).
\newblock {Exact sampling with coupled Markov chains and applications to
  statistical mechanics}.
\newblock {\em Random Structures and Algorithms\/} {\bf 9,} 223--252.

\bibitem{Ross-1996}
{\sc Ross, S.~M.} (1996).
\newblock {\em {Stochastic processes}} second~ed.
\newblock John Wiley \& Sons, Inc., New York.

\bibitem{Shah-2004}
{\sc Shah, S.~R.} (2005).
\newblock {Perfection in Conditioning and Coverage}.
\newblock {\em PhD thesis}.
\newblock Department of Statistics, University of Warwick.

\bibitem{Sigman-2011}
{\sc Sigman, K.} (2011).
\newblock {Exact Simulation of the Stationary Distribution of the FIFO M/G/c
  Queue}.
\newblock {\em Journal of Applied Probability\/} {\bf 48A,} 209--213.

\bibitem{Sigman-2012}
{\sc Sigman, K.} (2012).
\newblock {Exact simulation of the stationary distribution of the FIFO M/G/c
  queue: The general case for $\rho < c$}.
\newblock {\em Queueing Systems: Theory and Applications\/} {\bf 70,} 37--43.

\bibitem{Sigman-2013}
{\sc Sigman, K.} (2013).
\newblock {Using the M/G/1 queue under processor sharing for exact simulation
  of queues}.
\newblock {\em Annals of Operations Research\/} Published online, June 2013.

\bibitem{Wolff-1977}
{\sc Wolff, R.~W.} (1977).
\newblock {An upper bound for multi-channel queues}.
\newblock {\em Journal of Applied Probability\/} {\bf 14,} 884--888.

\bibitem{Wolff-1987}
{\sc Wolff, R.~W.} (1987).
\newblock {Upper bounds on work in system for multichannel queues}.
\newblock {\em Journal of Applied Probability\/} {\bf 24,} 547--551.

\end{thebibliography}

\end{document}